\documentclass[10pt,a4paper]{article}
\usepackage{amsmath}
\usepackage{amsfonts}
\usepackage{amssymb}
\usepackage{graphicx}
\usepackage{amsthm}
\usepackage[all]{xy}
\usepackage{comment}
\usepackage[subnum]{cases}

\usepackage{breqn}

\newcounter{oftheorem}[subsection]
\newenvironment{mytheorem}[1]%
{\begin{trivlist}
     \renewcommand{\theoftheorem}{#1~\thesection.\arabic{oftheorem}}
     \refstepcounter{oftheorem}
     \item[\hspace{\labelsep}\bf\thesection.\arabic{oftheorem} #1]}%
{\end{trivlist}}

\newenvironment{proposition}{\begin{mytheorem}{Proposition}\it}{\end{mytheorem}}
\newenvironment{theorem}{\begin{mytheorem}{Theorem}\it}{\end{mytheorem}}
\newenvironment{corollary}{\begin{mytheorem}{Corollary}\it}{\end{mytheorem}}

\newenvironment{lemma}{\begin{mytheorem}{Lemma}}{\end{mytheorem}}

\begin{document}

\title{Normal forms and first integrals of planar maps at elliptic fixed points}

\author{Stavros Anastassiou\\
Department of Mathematics\\
University of Western Macedonia\\
Kastoria, Greece\\
sanastassiou@gmail.com}

\maketitle

\begin{abstract}
We study real-analytic, planar, maps, at a neighbourhood of an elliptic fixed point. We construct normal forms for them, when they admit a real-analytic first integral. We prove that a second first integral, when it exists, forces their linearisation. We consider both the resonant and the non-resonant case, whether the maps  preserve a symplectic form or not. The second first integral is allowed to be smooth, real-meromorphic or real-analytic.
\end{abstract}
\vspace*{0.4cm}

\textbf{MSC2010:}Primary 37G05, 37J35; Secondary 53D22, 37J11\\

\textbf{Keywords:} elliptic fixed points, first integrals, Poincaré / Birkhoff normal forms, symplectic maps.
\section{Introduction}

The qualitative analysis of real-analytic, planar, map germs near an elliptic fixed point has attracted a lot of attention. Since the pioneering work of Poincaré and Birkhoff, this analysis relies heavily on formal normal forms to distinguish between regular behaviour and chaotic dynamics.

This analysis is greatly simplified by the existence of first integrals. However, deciding whether a map admits such an integral and studying the interplay between resonances, twist coefficients, and function regularity ($\mathcal{C}^\omega$, real-meromorphic, and $\mathcal{C}^\infty$) presents a challenge,  indeed.

In this paper, we consider a real-analytic planar map germ $f$ having an elliptic fixed point at the origin. This means that $f(0,0)=(0,0)$, while the linear map $\mathrm{d}_{(0,0)}f$ possesses complex-conjugate eigenvalues, located on the unit circle of the complex plane, which are not equal to $\pm 1$.

Let us assume that $f$ possesses an analytic first integral. This has two implications, the first one being that the coordinate transformation bringing $f$ to its normal form converges (\cite{Zung}).  Second, the normal form of $f$ can be further simplified. Then, the question arises of whether $f$ admits a second first integral, functionally independent from the first one. Depending on the eigenvalues of $d_{(0,0)}f$ and the regularity class we wish the second integral to belong to, a second first integral might not exist. If it exists, however, it forces the linearisation of $f$.

The paper is structured as follows.

In section 2, we recall the Poincar\'{e}-Birkhoff normal form theorem. It is used to bring the germ $f$ to a standard form, in order to simplify its subsequent study. In some detail, we construct the normal form of $f$ in the general and in the symplectic case, whether the eigenvalues satisfy a resonance condition or not. We do that not only to set up the notation, but for the convenience of the reader as well: we wanted to explicitly present these normal forms, in an unified approach.

In section 3, we study the non-resonant case. We assume that $f$ possesses an analytic first integral $I_1$, the second differential of which is a non-degenerate quadratic form. Due to the nature of the fixed point of $f$, $I_1$ is forced to have a non-degenerate extremum at the origin, which, without loss of generality, we assume to be a local minimum. In this case, we prove that $f$ is a twist map. We then proceed to show that a second first integral, functionally independent of $I_1$, cannot exist.

In section 4 we continue our study with the resonant case. Supposing, once again, that $f$ possesses a real-analytic first integral with a non-degenerate minimum at the origin, we derive its normal form. We then prove that a second, independent, first integral may exist, in contrast to the previous case, and it forces the linearisation of $f$.

Analogous statements are proven in case $f$ preserves a symplectic form.

The last section contains the conclusions of this work.

Our approach of Normal Form Theory uses the, so-called, classical style. We recommend \cite{Arnold,Kuznetsov} for a textbook exposition of this approach, in the case of general dynamical systems and \cite{Birkhoff, Moser,Siegel Moser} for the hamiltonian case. In \cite{Sanders Ferhulst Murdock} the $\mathfrak{sl}_2$-style is beautifully presented, although for flows. We would also like to point out the articles \cite{Bridges Cushman, Gelfreich}, in which the area preserving case is studied, \cite{Simo} for the study of stability of elliptic points and \cite{Jin Zhang} for an application to billiards.

\section{Poincar\'{e} -- Birkhoff normal forms}

Let $f:(\mathbb{R}^2,(0,0))\to (\mathbb{R}^2,(0,0))$ be the germ at the origin of a real-analytic map. The origin is supposed to be a fixed point for $f$, while the eigenvalues of its linear part there, $L=d_{(0,0)}f$, lie on the complex unit circle and are not equal to $\pm 1$. Thus, they can be written in the form $\mu=e^{i2\pi\omega_0}$ and $\bar{\mu}$, where $\omega_0\in (0,\frac{1}{2})$. Throughout this work, we shall refer to such a germ as ``a real-analytic, planar, map germ, having an elliptic fixed point at the origin with rotation number $\omega_0$".  

It is convenient to complexify the real plane ($\mathbb{R}^2\simeq \mathbb{C},\ z=x+iy$) and consider the complexified map $f=f(z,\bar{z})$ (by abusing notation, we use the same symbol for simplicity). We assume that, in these complex coordinates, $L(z)=\mu z$.

If the real map $f$ preserves the standard symplectic form $dx\wedge dy$ of the plane, the complexified map preserves the symplectic form $\frac{i}{2}dz\wedge d\bar{z}$. 

Of central importance in Normal Form Theory is the following:

\begin{theorem}[Poincar\'{e}-Birkhoff Normal Form Theorem] \label{Poincare Birkhoff theorem}\\
Let $f$ be the germ of a real--analytic map with an elliptic fixed point at the origin. Denote its complexification by $f$, as well. There exists a local, formal, change of coordinates $\phi:\mathbb{C}\to\mathbb{C}$, which is tangent to the identity, such that the map $f_{NF}:=\phi^{-1} \circ f \circ \phi$ commutes with the linear part of $f$, i.e., $f_{NF}\circ L=L\circ f_{NF}$.

In case $f$ preserves the standard symplectic form $dx\wedge dy$, the mapping $\phi$ can be chosen to be symplectic, preserving the form $\frac{i}{2}dz\wedge d\bar{z}$ and therefore $f_{NF}$ is a formal, symplectic, map germ.
\end{theorem}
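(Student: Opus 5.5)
The plan is to build $\phi$ as an infinite composition of polynomial changes of coordinates, removing the non-resonant terms one degree at a time. Write $f(z,\bar z)=\mu z+\sum_{k\ge 2} f_k(z,\bar z)$, where $f_k$ is a homogeneous polynomial of degree $k$ in $(z,\bar z)$. Inductively, suppose that $f$ already commutes with $L$ up to order $k-1$. Apply $\phi_k=\mathrm{id}+h_k$ with $h_k$ homogeneous of degree $k$. Modulo terms of order $\ge k+1$, the degree $k$ part of $\phi_k^{-1}\circ f\circ\phi_k$ is
\[
f_k+ h_k\circ L - L\circ h_k ,
\]
so only $f_k$ changes at this step and lower orders are untouched. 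Define the homological operator $\mathcal{L}_k(h):=h\circ L-L\circ h$ on the space of homogeneous degree $k$ polynomial maps $\mathbb{C}\to\mathbb{C}$, i.e. on functions $z\mapsto h(z,\bar z)$. On the monomial $z^a\bar z^b$ (with $a+b=k$) it acts diagonally, with eigenvalue $\mu^a\bar\mu^b-\mu$.

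Split this space as $\ker\mathcal{L}_k\oplus\operatorname{im}\mathcal{L}_k$, which is possible because the operator is diagonal. Choose $h_k$ to cancel the component of $f_k$ lying in the image, leaving only the resonant monomials, those with $\mu^a\bar\mu^b=\mu$. A map composed only of such monomials commutes with $L$, since $L\circ g=g\circ L$ is equivalent, monomial by monomial, to exactly this condition. Reality is preserved: the complexified map of a real map is a function of $(z,\bar z)$, and one checks that $h_k$ is again a well-defined polynomial in $z,\bar z$. The composition $\phi=\phi_2\circ\phi_3\circ\cdots$ converges formally, because each factor affects only degrees $\ge k$. This gives the first claim, with $\phi$ tangent to the identity.

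The symplectic case is the main obstacle, because $\mathrm{id}+h_k$ is not symplectic in general. I would instead generate $\phi_k$ as the time-one map of the Hamiltonian vector field of a homogeneous real polynomial $S_{k+1}$ of degree $k+1$, for the form $\frac{i}{2}dz\wedge d\bar z$. This map is symplectic, and its degree $k$ part equals $X_{S_{k+1}}$, with $X_S=-2i\,\partial S/\partial\bar z$ up to a normalising constant. The degree $k$ correction becomes $X_S\circ L-L\circ X_S$. Since $L$ is symplectic, this equals $X_{S\circ L - S}$ up to composition with $L$. The homological equation therefore becomes $S\circ L-S=$ the generating function of the non-resonant part. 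For this to make sense, $f_k$ itself must be Hamiltonian modulo $L$. I would handle this by writing $f$ at each stage as $L\circ\exp(X_{H})$ formally. This is possible because $f$ is symplectic and tangent to $L$: by the Poincaré lemma on the plane, a formal symplectic map near $L\circ\mathrm{id}$ is the time-one map of a formal Hamiltonian. Normalising then reduces to removing the terms $z^a\bar z^b$ of $H$ with $\mu^{a-b}\neq1$.

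The monomial $z^a\bar z^b$ in $S$ has eigenvalue $\mu^{a-b}-1$ under $S\mapsto S\circ L-S$. So every non-resonant term can be removed, the remaining $H$ Poisson-commutes with the rotation (is $L$-invariant), and $f_{NF}=L\circ\exp(X_H)$ commutes with $L$. Finally, the Baker–Campbell–Hausdorff formula is used to verify that conjugating by $\exp(X_S)$ changes $H$ at degree $k+1$ exactly by $S\circ L-S$, modulo higher order. Keeping the composition real is handled by choosing $S$ real-valued, using the symmetry $\overline{\mu^{a-b}}=\mu^{b-a}$.
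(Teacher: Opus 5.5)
The paper does not prove this theorem. It quotes it as the classical Poincar\'e--Birkhoff theorem, defers the ``classical style'' derivation to the cited literature, and afterwards only does the resonance bookkeeping $a_{mn}(\mu^{m-n-1}-1)=0$ on the already-normalised map. Your proposal is a correct, standard proof.

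Your non-symplectic half is exactly that classical degree-by-degree scheme: your eigenvalue $\mu^a\bar\mu^b-\mu$ vanishes precisely when $\mu^{a-b-1}=1$, which is the paper's resonance condition. For the symplectic half you use Lie transforms instead. You write $f=L\circ\exp(X_H)$, conjugate by time-one maps $\exp(X_S)$, and solve the scalar equation $S\circ L-S=H^{\mathrm{nr}}_{k+1}$, where $H^{\mathrm{nr}}_{k+1}$ is the non-resonant part of $H_{k+1}$. Classical treatments of area-preserving maps (e.g.\ Siegel--Moser) typically use generating functions. Your route makes every step automatically symplectic and reduces the symplectic homological equation to a scalar, diagonal one. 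The cost is a formal logarithm of $L^{-1}\circ f$ plus BCH bookkeeping. The generating-function route avoids the logarithm but works with implicitly defined maps.

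A few points to tidy:
\begin{itemize}
\item With $\phi_k=\mathrm{id}+h_k$, the degree-$k$ part of $\phi_k^{-1}\circ f\circ\phi_k$ is $f_k+L\circ h_k-h_k\circ L$. This is the opposite sign to yours, which is harmless.
\item Reality in the first half needs no check. Every complex-valued polynomial in $(z,\bar z)$ is the complex form of a real planar map.
\item Writing $L^{-1}\circ f=\exp(X_H)$ needs more than the Poincar\'e lemma, in three steps. First, a unique formal vector field $X=O(|z|^2)$ with $\exp(X)=L^{-1}\circ f$ exists, obtained degree by degree. Second, $\mathcal{L}_X\omega=0$: this follows from $\exp(\mathcal{L}_X)\omega=\omega$, because $\exp(\mathcal{L}_X)-I=\mathcal{L}_X\circ\bigl(I+\tfrac12\mathcal{L}_X+\cdots\bigr)$ and the second factor is formally invertible. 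Only then does the Poincar\'e lemma give $\iota_X\omega=dH$.
\item In the resonant case the normalised $H$ does not Poisson-commute with $|z|^2$, since it contains terms $z^{jq}|z|^{2m}$. It is merely $L$-invariant, which is all you use to conclude that $f_{NF}$ commutes with $L$.
\end{itemize}
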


In the general setting, the mapping $f_{NF}$ is called the ``Poincar\'{e} normal form" of f, while in the symplectic case we refer to it as ``the Birkhoff normal form".

We now derive analytical expressions for these normal forms, in four important cases. We do it both to keep the article self-complete and to present the normal forms in a unified manner. We chose to do it in some detail, for the convenience of the reader, although we do not include the full calculations, which can be found in the literature cited above.  

\begin{itemize}

\item {$\omega_0\notin \mathbb{Q}$

\ref{Poincare Birkhoff theorem} ensures that there exists a, local, near-identity, change of coordinates $z = \phi(w, \bar{w}) = w + \mathcal{O}(|w|^2)$, bringing $f$ to the normal form $f_{NF}(w, \bar{w})$, which commutes with its linear part $L(w) = \mu w$:
\[
f_{NF}(\mu w, \bar{\mu} \bar{w}) = \mu f_{NF}(w, \bar{w}).
\]

Expanding $f_{NF}$ in a complex Taylor series:
\[
f_{NF}(w, \bar{w}) = \sum_{m,n \ge 0} a_{mn} w^m \bar{w}^n,
\]
the commutativity relation yields:
\begin{align*}
\sum_{m,n \ge 0} a_{mn} \mu^m \bar{\mu}^n w^m \bar{w}^n &= \mu \sum_{m,n \ge 0} a_{mn} w^m \bar{w}^n \implies \\
\implies a_{mn} \left( \mu^{m-n-1} - 1 \right) &= 0, \quad \forall m, n \ge 0.
\end{align*}
Since $\omega_0 \notin \mathbb{Q}$, the resonance condition $\mu^{m-n-1} = 1$ holds if, and only if, $m = n + 1$. Consequently, non-zero coefficients $a_{mn}$ occur only for terms of the form $w^{n+1}\bar{w}^n = w|w|^{2n}$, allowing us to write $f_{NF}$ as follows:
\[
f_{NF}(w, \bar{w}) = \mu w \left( 1 + \sum_{k=1}^{\infty} c_k |w|^{2k} \right), \quad c_k \in \mathbb{C}.
\]

We choose to write the term in the parentheses in exponential form, using the following identity, which holds for an appropriate selection of $\beta_k\in \mathbb{C}$:
\[
1 + \sum_{k=1}^{\infty} c_k |w|^{2k} = \exp\left( \sum_{k=1}^{\infty} \beta_k |w|^{2k} \right),
\]
and we arrive at:
\begin{equation}
\label{non-res Poincare form}
f_{NF}(w, \bar{w}) = w \exp\left( i 2\pi \omega_0 + \sum_{k=1}^{\infty} (\alpha_k + i \gamma_k) |w|^{2k} \right),
\end{equation}
where $\beta_k = \alpha_k + i \gamma_k$. The mapping (\ref{non-res Poincare form}) is the \textbf{non-resonant Poincar\'{e} normal form}, while the constants $\gamma_k,\ \alpha_k$ are usually referred to as ``twist and radial growth coefficients" respectively.

In real polar coordinates $w = r e^{i\theta}$, the non-resonant Poincar\'{e} normal form reads as:
\begin{equation}
\label{non-res Poincare form polar}
f_{NF}(r,\theta) = \Bigg(r \exp\left( \sum_{k=1}^{\infty} \alpha_k r^{2k} \right), \theta + 2\pi \omega_0 + \sum_{k=1}^{\infty} \gamma_k r^{2k}\Bigg).
\end{equation}
}
\item {$\omega_0\notin \mathbb{Q}$ and $f$ symplectic

Now, in addition to $\omega_0 \notin \mathbb{Q}$, we assume that $f$ preserves the standard symplectic form $\mathrm{d}x \wedge \mathrm{d}y$ of the plane. The normal form (\ref{non-res Poincare form}) should preserve this symplectic form, as well. 

For convenience, we write the map (\ref{non-res Poincare form}) as follows:
\[
f_{\text{NF}}(w, \bar{w}) = \mu w \exp\bigl( Q(|w|^2) \bigr),
\]
where $Q(|w|^2) = \sum_{k=1}^{\infty} (\alpha_k + i \gamma_k) |w|^{2k}$.

The Jacobian determinant of $f_{NF}$, in complex coordinates, is given by:
\[
\det Df_{\text{NF}}(w, \bar{w}) = \frac{\partial f_{\text{NF}}}{\partial w} \frac{\partial \bar{f}_{\text{NF}}}{\partial \bar{w}} - \frac{\partial f_{\text{NF}}}{\partial \bar{w}} \frac{\partial \bar{f}_{\text{NF}}}{\partial w}.
\]

We compute that:
\begin{align*}
\frac{\partial f_{\text{NF}}}{\partial w} &= e^{i 2\pi \omega_0} e^{Q(|w|^2)} \left( 1 + w \frac{\partial Q}{\partial w} \right), &
\frac{\partial f_{\text{NF}}}{\partial \bar{w}} &= e^{i 2\pi \omega_0} w e^{Q(|w|^2)} \frac{\partial Q}{\partial \bar{w}}, \\[1.5ex]
\frac{\partial \bar{f}_{\text{NF}}}{\partial \bar{w}} &= e^{-i 2\pi \omega_0} e^{\bar{Q}(|w|^2)} \left( 1 + \bar{w} \frac{\partial \bar{Q}}{\partial \bar{w}} \right), &
\frac{\partial \bar{f}_{\text{NF}}}{\partial w} &= e^{-i 2\pi \omega_0} \bar{w} e^{\bar{Q}(|w|^2)} \frac{\partial \bar{Q}}{\partial w},
\end{align*}
while $\frac{\partial Q}{\partial w} = \bar{w} Q'(|w|^2)$ and $\frac{\partial Q}{\partial \bar{w}} = w Q'(|w|^2)$. Substituting all these into the determinant formula, we get:
\[
\det Df_{\text{NF}}(w, \bar{w}) = \exp\left( 2 \sum_{k=1}^{\infty} \alpha_k |w|^{2k} \right) \left( 1 + 2\sum_{k=1}^{\infty} k \alpha_k |w|^{2k} \right).
\]

To ensure the preservation of the symplectic form, we demand $\det Df_{\text{NF}} \equiv 1$, which forces $\alpha_k = 0$ for all $k \ge 1$. Thus, we end up with the expression:
\begin{equation}
\label{non-res Birkhoff form}
f_{\text{NF}}(w, \bar{w}) = w \exp \left( i \left[ 2\pi \omega_0 + \sum_{k=1}^{\infty} \gamma_k |w|^{2k} \right] \right),
\end{equation}
which is the \textbf{non-resonant Birkhoff normal form}.

In polar coordinates, we get the twist map:
\begin{equation}
\label{non-res Birkhoff form polar}
f_{NF}(r,\theta)=(r,\theta + 2\pi \omega_0 + \sum_{k=1}^{\infty} \gamma_k r^{2k}),
\end{equation}
which is easily seen to be integrable.
}

\item{$\omega_0 \in \mathbb{Q}$

We now consider the resonant case where the rotation number is rational, $\omega_0 = \frac{p}{q} \in \mathbb{Q}\cap (0,\frac{1}{2})$, with $\gcd(p,q) = 1$ and $q \ge 3$.

The commutativity requirement $f_{\text{NF}}(\mu w, \bar{\mu} \bar{w}) = \mu f_{\text{NF}}(w, \bar{w})$ gives us:
\[
a_{mn} \left( \mu^{m-n-1} - 1 \right) = 0, \quad \forall m, n \ge 0.
\]
Since $\mu = e^{2\pi i p/q}$, we have $\mu^q = 1$. The resonance condition $\mu^{m-n-1} = 1$ holds if, and only if, $m - n - 1 = jq$, for some $j \in \mathbb{Z}$.

Non-resonant terms (i.e., those with $m - n - 1 \not\equiv 0 \pmod q$) can be removed from the sought-after normal form, using formal coordinate transformations, and we are thus left with two distinct classes of monomials which cannot be removed:
\begin{enumerate}
    \item[a)] Those corresponding to $j = 0 \implies m = n + 1$, and are of the form $w^{n+1}\bar{w}^n = w |w|^{2n}$;
    \item[b)] Those corresponding to $j \neq 0$ and are of the form:
    \begin{itemize}
        \item[i)] $w^{jq+1} |w|^{2n}$, for $j \ge 1$;
        \item[ii)] $\bar{w}^{kq-1} |w|^{2m}$, for $j \le -1$ (we have set $k = -j \ge 1$, and $n = m + kq - 1$, $m \ge 0$).
    \end{itemize}
\end{enumerate}

We can factor out $w$ (and then write the non-linear terms in exponential form, as we did above) for all monomials in family (a) and for those monomials in family $(b)$ which have $m\geq 1$. The remaining family-(b) monomials, having $m=0$ (that is, those of the form $\bar{w}^{jq-1}$) must be treated separately. We obtain the \textbf{resonant Poincaré normal form}:
\begin{align}
\label{res Poincare form}
f_{\text{NF}}(w, \bar{w}) = \mu w \exp & \Bigg( \sum_{k=1}^{\infty} (\alpha_k + i \gamma_k) |w|^{2k} \;+ \\ \nonumber
&+\sum_{j=1}^{\infty}\left( A_j(|w|^2) w^{jq} + B_j(|w|^2) \bar{w}^{jq} \right)\Bigg) \\ \nonumber
&+ \sum_{j=1}^{\infty} C_j(|w|^2)\, \bar{w}^{jq-1},
\end{align}
where $\alpha_k, \gamma_k \in \mathbb{R}$, and $A_j(|w|^2), B_j(|w|^2), C_j(|w|^2) \in C^\omega(\mathbb{R}_{\ge 0},\mathbb{C})$.

In polar coordinates ($w = r e^{i \theta}$), the map reads as:
\begin{equation}
\label{res Poincare form polar}
f_{\text{NF}}(r, \theta) = 
\begin{pmatrix}
r \exp \left( \sum_{k=1}^{\infty} \alpha_k r^{2k} + R_q(r, \theta) \right) \\[2ex]
\theta + \frac{2\pi p}{q} + \sum_{k=1}^{\infty} \gamma_k r^{2k} + \Theta_q(r, \theta)
\end{pmatrix},
\end{equation}
where $R_q(r, \theta)$ and $\Theta_q(r, \theta)$ are real-analytic functions that are $2\pi/q$-periodic in $\theta$ and vanish at $r=0$.
}

\item {$\omega_0 \in \mathbb{Q}$ and $f$ symplectic

Finally, let $f$ preserve the standard symplectic form, in addition to $\omega_0 = \frac{p}{q} \in \mathbb{Q}\cap (0,\frac{1}{2})$, $\gcd(p,q) = 1$, $q \ge 3$.

We express the resonant Poincar\'{e} normal form \eqref{res Poincare form} as:
\[
f_{\text{NF}}(w, \bar{w}) = \mu w \exp\bigl( Q(w, \bar{w}) \bigr) + h(w,\bar{w}),
\]
where
\[
Q(w, \bar{w}) = \sum_{k=1}^{\infty} (\alpha_k + i \gamma_k) |w|^{2k} \;+\; \sum_{j=1}^{\infty} \left( A_j(|w|^2) w^{jq} + B_j(|w|^2) \bar{w}^{jq} \right),
\]
and
\[
h(w,\bar{w}) = \sum_{j=1}^{\infty} C_j(|w|^2)\, \bar{w}^{jq-1}.
\]

Writing $g(w,\bar w):=\mu w e^{Q(w,\bar w)}$, so that $f_{\text{NF}} = g + h$, we compute:
\begin{align*}
\frac{\partial f_{\text{NF}}}{\partial w} &= \mu e^{Q}\left(1 + w \frac{\partial Q}{\partial w}\right) + \frac{\partial h}{\partial w}, &
\frac{\partial f_{\text{NF}}}{\partial \bar{w}} &= \mu w e^{Q} \frac{\partial Q}{\partial \bar{w}} + \frac{\partial h}{\partial \bar{w}}, \\[1.5ex]
\frac{\partial \bar{f}_{\text{NF}}}{\partial \bar{w}} &= \bar{\mu} e^{\bar{Q}}\left(1 + \bar{w} \frac{\partial \bar{Q}}{\partial \bar{w}}\right) + \frac{\partial \bar{h}}{\partial \bar{w}}, &
\frac{\partial \bar{f}_{\text{NF}}}{\partial w} &= \bar{\mu} \bar{w} e^{\bar{Q}} \frac{\partial \bar{Q}}{\partial w} + \frac{\partial \bar{h}}{\partial w},
\end{align*}
with
\[
\frac{\partial h}{\partial w} = \sum_{j=1}^{\infty} C_j'(|w|^2)\, \bar{w}^{jq}
\]
and
\[
\frac{\partial h}{\partial \bar{w}} = \sum_{j=1}^{\infty} \Big[ C_j'(|w|^2)\, w \bar{w}^{jq-1} + (jq-1) C_j(|w|^2)\, \bar{w}^{jq-2} \Big].
\]

Analysing the condition $\det Df_{\text{NF}}\equiv1$, leads us to the \textbf{resonant Birkhoff normal form}, which reads as:
\begin{align}
\label{res Birkhoff form}
f_{\text{NF}}(w, \bar{w}) =\ & w \exp\!\Bigg( i \Bigg[ \sum_{k=1}^{\infty} \gamma_k |w|^{2k} +\\ \nonumber
&+ \sum_{j=1}^{\infty} \Big( D_j(|w|^2)\, w^{jq} + \bar{D}_j(|w|^2)\, \bar{w}^{jq} \Big) \Bigg] \Bigg) \\ 
& + \sum_{j=1}^{\infty} C_j(|w|^2)\, \bar{w}^{jq-1}, \nonumber
\end{align}
where the coefficients $\gamma_k \in \mathbb{R}$ and $D_j, C_j \in C^\omega(\mathbb{R}_{\ge0},\mathbb{C})$ are determined recursively, using the condition $\det Df_{\text{NF}}\equiv 1$.

In polar coordinates $w = re^{i\theta}$, the map \eqref{res Birkhoff form} reads as:
\begin{equation}
\label{res Birkhoff form polar}
f_{\text{NF}}(r, \theta) = \left( r + R_q(r, \theta),\ \theta + \dfrac{2\pi p}{q} + \sum_{k=1}^{\infty} \gamma_k r^{2k} + \Theta_q(r, \theta) \right),
\end{equation}
where $R_q(r,\theta)$ and $\Theta_q(r,\theta)$ are real-analytic functions, $2\pi/q$-periodic in $\theta$ and vanishing at $r=0$, which should be chosen so as $\det Df_{\text{NF}}\equiv1$.

In contrast to the non-resonant Birkhoff normal form \eqref{non-res Birkhoff form polar}, the resonant Birkhoff normal form is, in general, not integrable.
}
\end{itemize}

These normal forms will be of use to us, in what follows.


\section{In the absence of resonances}

Let us suppose that the normal forms constructed in the previous section have an analytic first integral. It is easy to show (by expanding in power series, for example) that the origin is a critical point for this integral, while its second differential there is of the form $c(x^2+y^2)$, for some $c\in \mathbb{R}$. In the generic case, this $c$ won't be equal to $0$ and this is why we assume, without loss of generality, that the origin is a non-degenerate minimum for the integral, in what follows.

The existence of an analytic first integral has an important implication. As we saw above, \ref{Poincare Birkhoff theorem} ensures that the change of coordinates bringing a map to its normal form is only formal, but the existence of an analytic first integral ensures us that this change of coordinates indeed converges (see \cite{Zung} and references therein). We shall use this fact frequently.

Furthermore, the existence of an analytic first integral simplifies the normal form of a map.

In this section, we deal with the non-resonant case.


\begin{theorem} \label{theorem a}
Let $f$ be the germ of a real-analytic map, having an elliptic fixed point at the origin with irrational rotation number $\omega_0 \in (0,\frac{1}{2})$. Let us suppose that $f$ admits a real-analytic first integral $I_1: U \to \mathbb{R}$, in a neighbourhood $U$ of the origin, such that $I_1$ has a non-degenerate local minimum at $(0,0)$. Then $f$ is real-analytically conjugate to:
\[
f_{\nu}(z, \bar{z}) = z \exp\left( i \left[ 2\pi \omega_0 + \sum_{k=1}^{\infty} \gamma_k |z|^{2k} \right] \right),
\]
in complexified form, where $\gamma_k \in \mathbb{R}$.
\end{theorem}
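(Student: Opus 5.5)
We start from the Poincar\'{e}--Birkhoff Normal Form Theorem and, since $f$ has the real-analytic first integral $I_1$, invoke the convergence result quoted above (\cite{Zung}). This gives a real-analytic, near-identity change of coordinates $\phi$ with $\phi^{-1}\circ f\circ\phi=f_{NF}$, where $f_{NF}$ is the non-resonant Poincar\'{e} normal form (\ref{non-res Poincare form}), convergent near the origin. In the new coordinates, $J:=I_1\circ\phi$ is a real-analytic first integral of $f_{NF}$. Since $\phi$ is tangent to the identity, $J$ still has a non-degenerate minimum at the origin. It then remains to show two things: that all radial growth coefficients $\alpha_k$ vanish, and that the resulting map is the stated $f_\nu$.

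\textbf{Killing the $\alpha_k$.} We use the polar form (\ref{non-res Poincare form polar}). The radial component $r\mapsto \rho(r):=r\exp(\sum_k\alpha_k r^{2k})$ does not depend on $\theta$, and the angular shift is an irrational rotation $2\pi\omega_0$ plus a function of $r$. Suppose some $\alpha_k\neq0$, and let $\alpha_m$ be the first nonzero one. Then $\rho(r)=r(1+\alpha_m r^{2m}+\dots)$, so for small $r>0$ we have strictly $\rho(r)<r$ (if $\alpha_m<0$) or $\rho(r)>r$ (if $\alpha_m>0$).

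In the case $\alpha_m<0$, every orbit starting at small $r_0>0$ has $r_n$ strictly decreasing to $0$, because $0$ is the only fixed point of $\rho$ nearby. Constancy of $J$ along the orbit then gives $J(x_0)=\lim_n J(x_n)=J(0)$. This contradicts the strict minimum of $J$ at $0$. The case $\alpha_m>0$ reduces to the previous one by applying the same argument to $f_{NF}^{-1}$, which also preserves $J$ and whose radial map contracts. Hence $\alpha_k=0$ for all $k$, and $f_{NF}$ takes exactly the form $z\exp(i[2\pi\omega_0+\sum\gamma_k|z|^{2k}])$ with real $\gamma_k$, so $f$ is real-analytically conjugate to $f_\nu$ via $\phi$.

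\textbf{Expected obstacle.} The dynamical argument is elementary. The delicate points are making sure that the conjugacy delivered by \cite{Zung} is genuinely real (that is, it commutes with complex conjugation) and that it conjugates $f$ to the normal form (\ref{non-res Poincare form}) itself, rather than to some other convergent normal form. If that is unclear, we would first normalize $J$ instead. We average $J$ over the closure of the group generated by the linear irrational rotation: since $f_{NF}$ is equivariant and $J$ is an invariant with a nondegenerate minimum, one can show formally, term by term, that the normal-form transformation can be chosen so that $J$ becomes a function of $|w|^2$ alone. Then $|f_{NF}(w)|=|w|$ follows at once from $J(f_{NF}(w))=J(w)$ and the strict monotonicity of $J$ in $|w|^2$, which again forces $\alpha_k=0$.
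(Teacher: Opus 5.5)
Your proof is correct, but it kills the $\alpha_k$ by a different argument from the paper's. Both proofs begin the same way: Poincar\'e normal form, convergence of the normalizing map by Zung's theorem, and pulling back $I_1$. The paper then argues algebraically. Using the $U(1)$-equivariance of $f_{NF}$, it shows by induction on degree that each off-diagonal coefficient $c_{j,k}$ ($j\neq k$) of the transformed integral satisfies $c_{j,k}(\mu^{j-k}-1)=0$ and so vanishes. The integral is therefore $F(|z|^2)$ with $F'(0)>0$, and monotonicity of $F$ gives $|f_{NF}(z)|=|z|$, hence $\alpha_k=0$. This is essentially your fallback under ``Expected obstacle'', except that no special choice of normalizing transformation is needed: in any Poincar\'e normal form coordinates the invariant is automatically a function of $|z|^2$. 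Your main route is instead a Lyapunov-type dynamical argument. A first nonzero $\alpha_m$ makes the origin attracting for $f_{NF}$ (if $\alpha_m<0$) or for $f_{NF}^{-1}$ (if $\alpha_m>0$). A continuous function that is constant along an orbit converging to the origin cannot have a strict local minimum there. This is more elementary and more robust: it uses only continuity of $J$ and strictness of the minimum, not analyticity, non-degeneracy or series manipulations. In exchange, the paper's route yields the structural fact that, in normal-form coordinates, $I_1$ itself depends on $|z|^2$ alone. The paper reuses this when ruling out a second first integral in the non-resonant case. With your approach that fact must be recovered afterwards, for instance by applying the density-of-irrational-radii argument to $J$ once $f_{NF}=f_\nu$ is established.
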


\begin{proof}
As usual, we identify $\mathbb{R}^2$ with the complex plane $\mathbb{C}$, so that the linear map $L$ acts as multiplication by $\mu = e^{i2\pi\omega_0}$.

By \ref{Poincare Birkhoff theorem}, there exists a local, formal, change of coordinates $\phi$, tangent to the identity, such that $f_{NF} := \phi^{-1} \circ f \circ \phi$ equals \eqref{non-res Poincare form}, since $\omega_0 \notin \mathbb{Q}$. The hypothesis that $f$ admits the real-analytic first integral $I_1$ ensures (see \cite{Zung}) that this formal $\phi$ in fact converges, providing us with a, local, real-analytic diffeomorphism $\phi: U \to V$, with $\phi(0,0)=(0,0)$ and $d_{(0,0)}\phi = \mathrm{Id}$.

Define the transformed first integral $\tilde{I}_1 := I_1 \circ \phi^{-1}: V \to \mathbb{R}$. Since $I_1 \circ f = I_1$, it follows that $\tilde{I}_1 \circ f_{NF} = \tilde{I}_1$. Expand $\tilde{I}_1$ as a, convergent, power series near the origin:
\[
\tilde{I}_1(z, \bar{z}) = \sum_{j,k=0}^{\infty} c_{j,k} z^j \bar{z}^k, \quad \text{with } c_{j,k} = \overline{c_{k,j}} \in \mathbb{C}, \quad c_{0,0}=0.
\]

We claim that $c_{j,k} = 0$ for all $j \ne k$ and we shall prove it using induction on the degree $N = j+k$.

First, we notice that $f_{NF}$ is equivariant under the circle action:
\[
f_{NF}(e^{i\theta}z, e^{-i\theta}\bar{z}) = e^{i\theta} f_{NF}(z,\bar{z}), \qquad \forall\, \theta \in \mathbb{R}.
\]
Consequently, if a power series $G(z,\bar{z})$ is $U(1)$-invariant (i.e.\ a function of $|z|^2$ alone), then so is $G(f_{NF}(z,\bar{z}))$.

Writing $f_{NF}(z,\bar z) = \mu z + \mathcal{O}(|z|^2)$ and matching the degree-$2$ terms in:
\[
\tilde{I}_1(f_{NF}(z,\bar z)) = \tilde{I}_1(z,\bar z),
\]
gives us:
\[
c_{j,k}\, \mu^{j-k} z^j\bar{z}^k = c_{j,k}\, z^j\bar{z}^k,\ \text{for}\ j+k=2 \implies c_{j,k}\left(\mu^{j-k}-1\right)=0.
\]
Since $\omega_0 \notin \mathbb{Q}$, the equation $\mu^{j-k}=1$ holds if, and only if, $j=k$ and we may conclude that $c_{2,0}=c_{0,2}=0$; thus the degree $2$ homogeneous part of $\tilde{I}_1$ is a function of $|z|^2$.

Now, let us suppose that, for all $N' < N$, the degree $N'$ homogeneous part of $\tilde{I}_1$ is a function of $|z|^2$ alone and write:
\[
\tilde{I}_1 = \tilde{I}_1^{<N} + \tilde{I}_1^{(N)} + \tilde{I}_1^{>N},
\]
where $\tilde{I}_1^{<N}$ stands for all the terms with degrees smaller than $N$ (all $U(1)$-invariant) and $\tilde{I}_1^{(N)}$ stands for the degree $N$ homogeneous part. Expanding $\tilde{I}_1(f_{NF}(z,\bar z))$ and collecting $N$ degree terms, we see that the only terms of this order which are not $U(1)$-invariant are those given by $\tilde{I}_1^{(N)}$ evaluated on the linear part $\mu z$ of $f_{NF}$ (every other term $\tilde{I}_1^{<N}$ composed with $f_{NF}$ is $U(1)$-invariant). Equating the off-diagonal ($j \ne k$) part of the degree-$N$ equation therefore yields, exactly as in the $j+k=2$ case:
\[
c_{j,k}\left(\mu^{j-k}-1\right) = 0, \qquad j+k=N,\ j\ne k,
\]
which forces $c_{j,k}=0$ for all $j \ne k$ with $j+k=N$. Thus, we can conclude, using induction, that $\tilde{I}_1$ is a function of $|z|^2$ alone:
\[
\tilde{I}_1(z, \bar{z}) = \sum_{k=1}^{\infty} c_{k,k} |z|^{2k} = F(|z|^2),
\]
for some real-analytic $F: [0, \epsilon) \to \mathbb{R}$. Since $I_1$ has a non-degenerate local minimum at the origin, and $\phi$ is tangent to the identity (and thus preserves $2$-jets), we have $F(0) = 0$ and $F'(0) = c_{1,1} > 0$.

Due to the fact that $\tilde{I}_1$ is a first integral of $f_{NF}$, we have $F(|f_{NF}(z, \bar{z})|^2) = F(|z|^2)$, and since $F'(0) > 0$, $F$ is strictly monotonic on $[0, \epsilon)$, implying:
\[
|f_{NF}(z, \bar{z})|^2 = |z|^2.
\]
Substituting the normal form \eqref{non-res Poincare form} into this equation gives us:
\[
|z|^2 \exp\left( 2 \sum_{k=1}^{\infty} \alpha_k |z|^{2k} \right) = |z|^2 \implies \alpha_k = 0 \quad \text{for all } k \ge 1,
\]
and thus the conclusion.

The real-analytic diffeomorphism $\phi$ directly conjugates $f$ to $f_{\nu}$.
\end{proof}

In case $f$ is a symplectic mapping, with irrational rotation number, we can bring it to the non-resonant Birkhoff normal form, which has the real-analytic function $x^2+y^2$ as a first integral. Thus $f$ is, formally, integrable. If we assume that $f$ also possess a real-analytic first integral, the symplectomorphism bringing $f$ to this normal form is also real-analytic, but the normal form cannot be further simplified. 

For the shake of completeness, we now state this fact.

\begin{proposition}\label{thm:symplectic_nonresonant}
Let $f$ be the germ of a real-analytic, symplectic, planar map, having an elliptic fixed point at the origin with irrational rotation number $\omega_0 \in (0,\tfrac12)$. Suppose $f$ admits a real-analytic first integral $I_1: U \to \mathbb{R}$ with a non-degenerate local minimum at $(0,0)$.

Then $f$ is real-analytically, symplectically, conjugate to its non-resonant Birkhoff normal form \eqref{non-res Birkhoff form}.
\end{proposition}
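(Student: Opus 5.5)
The plan mirrors the proof of Theorem \ref{theorem a}, with the symplectic part of Theorem \ref{Poincare Birkhoff theorem} in place of the general one.

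\emph{Step 1: a formal symplectic normalisation.} By the symplectic part of Theorem \ref{Poincare Birkhoff theorem}, there is a formal symplectic change of coordinates $\phi$, tangent to the identity, such that $\phi^{-1}\circ f\circ\phi$ is the non-resonant Birkhoff normal form \eqref{non-res Birkhoff form}. This holds because $\omega_0\notin\mathbb{Q}$, and the determinant computation of Section 2 forces $\alpha_k=0$ for all $k$.

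\emph{Step 2: convergence of a symplectic normalisation.} The existence of the real-analytic first integral $I_1$, through the convergence result of \cite{Zung}, gives a convergent normalising transformation $\psi$. A priori, however, $\psi$ is only a Poincaré normalisation, and the cited result may not keep track of the symplectic structure. This is the main obstacle. I would handle it in two steps.

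First, run the proof of Theorem \ref{theorem a} with $\psi$. This shows that $\psi$ conjugates $f$ analytically to $f_\nu$, a rotation $r\mapsto r$, $\theta\mapsto\theta+\Omega(r^2)$ with $\Omega(s)=2\pi\omega_0+\sum_k\gamma_k s^k$. It also shows that $\tilde I_1=F(|z|^2)$ with $F'(0)>0$.

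Second, correct $\psi$ to a symplectic map. Transport the form: $\psi^*$ of the standard form gives an analytic area form $\sigma = \rho(z)\,\tfrac{i}{2}dz\wedge d\bar z$, with $\rho(0)=1$, and $\sigma$ is preserved by $f_\nu$.

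\emph{Step 3: $\sigma$ is rotationally invariant.} I would show that $\rho$ depends only on $|z|$. Expand $\rho$ in Fourier modes $\rho_m(r)e^{im\theta}$. Invariance of $\sigma$ under $f_\nu$ gives
\[
\rho_m(r)\bigl(e^{im\Omega(r^2)}-1\bigr)=0,
\]
which uses that $f_\nu$ has Jacobian $1$ in the $(r,\theta)$ variables. Whenever $\Omega$ is nonconstant, $\Omega(r^2)/2\pi$ is irrational for all $r$ outside a countable set, so $\rho_m\equiv 0$ for $m\neq0$ by analyticity. If $\Omega$ is constant, $f_\nu$ is an irrational rotation; averaging over orbits and using the density of $\{n\omega_0\}$ gives the same conclusion. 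The degree-by-degree power-series argument of Theorem \ref{theorem a} also works here directly, since $\mu^{j-k}\neq1$ for $j\neq k$.

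\emph{Step 4: a radial correction.} With $\rho=\rho(|z|^2)$, define the radial change
\[
R(r)^2=\int_0^{r^2}\rho(s)\,ds .
\]
This is analytic and tangent to the identity, and the map $\chi:(r,\theta)\mapsto(R(r),\theta)$ carries $\sigma$ to the standard form. Since $\chi$ commutes with the rotation in $\theta$, it conjugates $f_\nu$ to another map of the form \eqref{non-res Birkhoff form}, with new real coefficients $\gamma_k'$. The composition $\psi\circ\chi^{-1}$ is then a real-analytic symplectic conjugacy of $f$ to its Birkhoff normal form.

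Finally, the coefficients $\gamma_k'$ are those of the formal Birkhoff normal form from Step 1, because the Birkhoff normal form of a symplectic map is formally unique.
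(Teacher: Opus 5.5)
Your argument is correct, and it takes a genuinely different route from the paper. The paper gives no separate proof. In the paragraph before the statement, it simply asserts that once $f$ has an analytic first integral, Zung's convergence theorem makes the symplectic Birkhoff normalising map of \ref{Poincare Birkhoff theorem} itself real-analytic. The formal symplectic conjugacy is then already the desired one.

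You use the convergence result only in the form already needed for \ref{theorem a}: a convergent, possibly non-symplectic, normalisation $\psi$ to $f_\nu$. You then repair symplecticity by hand, in three steps:
\begin{itemize}
\item The transported form $\sigma=\psi^*(dx\wedge dy)$ is $f_\nu$-invariant.
\item Since $\det Df_\nu=1$, its density $\rho$ is a first integral of $f_\nu$, and hence a function of $|z|^2$.
\item The radial Moser-type change $R(r)^2=\int_0^{r^2}\rho$ is analytic, tangent to the identity and commutes with rotations. It carries $\sigma$ to the standard form and keeps the twist shape, so $\psi\circ\chi^{-1}$ is the required symplectic conjugacy.
\end{itemize}

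What your route buys is independence from a symplectic version of the convergence theorem. That is not automatic: normalisations are far from unique (compose with any formal map $z\mapsto z\,h(|z|^2)$), and a convergence theorem only guarantees that \emph{some} normalisation converges, not that a chosen symplectic one does. The cost is a few more elementary steps. The paper's route is a one-liner but rests entirely on that black box.

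Two small remarks.
\begin{itemize}
\item In the constant-$\Omega$ case no averaging is needed: $e^{2\pi i m\omega_0}\neq 1$ for $m\neq0$ already forces $\rho_m\equiv 0$.
\item Your final identification of the $\gamma_k'$ with the formal Birkhoff coefficients relies on formal uniqueness of the non-resonant Birkhoff normal form. That fact is standard but genuinely symplectic: without area preservation, a radial change $r\mapsto R(r)$ does alter the higher twist coefficients. It is worth citing explicitly.
\end{itemize}
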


To proceed with our study, we need the following:

\begin{lemma}\label{lem:dense_irrational_rotation_radii}
Let $\omega_0 \in (0,\tfrac12)\setminus\mathbb{Q}$ and $\Omega:[0,\epsilon)\to\mathbb{R}$ be a real-analytic function, with $\Omega(0) = 2\pi\omega_0$. The set:
\[
R_{\text{irr}} := \left\{ r \in (0,\epsilon) : \frac{\Omega(r^2)}{2\pi} \notin \mathbb{Q} \right\}
\]
is dense in $(0,\epsilon)$.
\end{lemma}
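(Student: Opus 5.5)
Set $g(r):=\Omega(r^2)/(2\pi)$, a real-analytic function of $r\in[0,\sqrt{\epsilon})$; I read the statement with the radius range where $r^2$ stays in the domain of $\Omega$, the argument being the same on $(0,\epsilon)$. The plan is a dichotomy according to whether $g$ is constant.

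\emph{Case 1: $g$ is constant.} Then $g\equiv g(0)=\omega_0\notin\mathbb{Q}$, so every $r$ in the interval lies in $R_{\text{irr}}$. The set is the whole interval and is trivially dense.

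\emph{Case 2: $g$ is not constant.} Fix an open subinterval $J\subset(0,\epsilon)$; it suffices to find $r\in J$ with $g(r)\notin\mathbb{Q}$. Since $g$ is real-analytic and non-constant on a connected interval, $g'$ is not identically zero. By the identity theorem the zeros of $g'$ are isolated, so $g'$ cannot vanish on all of $J$. Hence there is a point $r_1\in J$ with $g'(r_1)\neq 0$, and by continuity a small interval $J_1\subset J$ on which $g$ is strictly monotone. Then $g(J_1)$ is a nondegenerate interval, which is uncountable, while $\mathbb{Q}$ is countable. So $g(J_1)$ contains an irrational number $g(r)$ with $r\in J_1\subset J$. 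This proves density.

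The only step needing care is the one invoking analyticity: excluding that $g$ is locally constant on some $J$ without being globally constant. This is exactly the identity theorem for real-analytic functions on a connected interval: if $g'$ vanishes on an open set, it vanishes identically, so $g$ is constant, which is Case 1. Everything else is elementary countability, with no Diophantine input needed.
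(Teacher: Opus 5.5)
Your proof is correct and takes essentially the paper's approach. Both use the same constant/non-constant dichotomy, with analyticity giving isolated zeros of the derivative, hence strict monotonicity on small intervals, combined with countability of $\mathbb{Q}$. The only difference is cosmetic: the paper globalises the argument to show that $R_{\text{rat}}$ is countable, as a countable union of countable sets over the monotonicity intervals, whereas you argue locally on each subinterval $J$, which suffices for density.
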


\begin{proof}
If $\Omega$ is constant, $\Omega(r^2)/2\pi = \omega_0 \notin \mathbb{Q}$ for every $r$, so $R_{\text{irr}} = (0,\epsilon)$.

Let us now suppose that $\Omega$ is non-constant. Being analytic, its derivative vanishes on at most a discrete subset of $(0,\epsilon)$ (see, for example, \cite{Krantz Parks}) and this discrete subset decomposes the interval $(0,\epsilon)$ into countably many open intervals.

On each such interval, $\Omega(r^2)$ is strictly monotonic (its derivative does not vanish on these intervals), thus injective. As a consequence, the set:
\[
R_{\text{rat}} := \{r : \Omega(r^2)/2\pi \in \mathbb{Q}\}
\]
is a countable union (since the intervals are countable) of countable sets (since $\mathbb{Q}$ is countable) and, therefore, countable itself. Its complement, $R_{\text{irr}}$, being the complement of a countable subset of an interval, must therefore be dense.
\end{proof}

We are now ready to prove that $f$ does not admit a second first integral, in the non-resonant case. We allow the second first integral to be smooth (i.e. $C^{\infty}$), real-meromorphic or real-analytic. In the case of a real-meromorphic first integral, we can write it as $P/Q$, where $P,\ Q$ real-analytic functions. Throughout this paper we assume that the origin is not a pole for this fraction, that is, $Q(0,0)\neq 0$.

\begin{theorem}\label{thm:unified_no_second_integral}
Let $f$ be a germ as in \ref{theorem a}. Then $f$ admits no second first integral $I_2$ (either continuous, smooth, real-meromorphic with no pole at the origin or real analytic), functionally independent of $I_1$ on an non-empty and open subset of $U\setminus\{(0,0)\}$.

The same conclusion holds, even in the case where $f$ is additionally assumed to be symplectic.
\end{theorem}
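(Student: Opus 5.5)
The plan is to reduce everything to the normal form of \ref{theorem a} and then exploit the fact that $f_\nu$ acts on each circle $|z|=r$ as a rigid rotation. By \ref{theorem a} there is a real-analytic diffeomorphism $\phi$, tangent to the identity, conjugating $f$ to
\[
f_\nu(z,\bar z)=z\exp\bigl(i\,\Omega(|z|^2)\bigr), \qquad \Omega(s)=2\pi\omega_0+\sum_{k\ge1}\gamma_k s^k .
\]
In the symplectic case I would use \ref{thm:symplectic_nonresonant} instead, which gives the same form. Suppose $I_2$ is a first integral of $f$ of one of the allowed regularity classes. Then $\tilde I_2:=I_2\circ\phi^{-1}$ is a first integral of $f_\nu$ of the same class. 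Functional independence of $I_1,I_2$ on an open set is also preserved by the diffeomorphism $\phi$. From the proof of \ref{theorem a} we know that $\tilde I_1=F(|z|^2)$ with $F'(0)>0$. It therefore suffices to show that $\tilde I_2$ is a function of $|z|^2$ alone. Then $d\tilde I_1\wedge d\tilde I_2\equiv0$ wherever both are defined, which contradicts independence on a non-empty open set.

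\textbf{Continuous (hence smooth or analytic) case.} In polar coordinates $f_\nu(r,\theta)=(r,\theta+\Omega(r^2))$. So each circle $C_r$ is invariant, and $f_\nu|_{C_r}$ is the rotation by $\Omega(r^2)$. By \ref{lem:dense_irrational_rotation_radii}, the radii $r$ with $\Omega(r^2)/2\pi\notin\mathbb{Q}$ form a dense set $R_{\text{irr}}\subset(0,\epsilon)$. For $r\in R_{\text{irr}}$, every orbit on $C_r$ is dense in $C_r$. A continuous invariant function is constant along each orbit, so it is constant on the closure of the orbit, i.e. on all of $C_r$. Thus $\tilde I_2(r,\theta)=g(r)$ for all $r\in R_{\text{irr}}$. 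Given any $r_0\in(0,\epsilon)$ and any two angles $\theta_1,\theta_2$, choose $r_n\to r_0$ with $r_n\in R_{\text{irr}}$. Continuity gives $\tilde I_2(r_0,\theta_1)=\lim g(r_n)=\tilde I_2(r_0,\theta_2)$. So $\tilde I_2$ is constant on every circle, i.e. $\tilde I_2=G(|z|^2)$, which is the required contradiction.

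\textbf{Real-meromorphic case (the main obstacle).} Write $\tilde I_2=P/Q$ with $Q(0,0)\neq0$. The difficulty is that $\tilde I_2$ is not defined on the zero set of $Q$, and a circle $C_r$ may meet this set, so the density argument cannot be applied verbatim. I would handle this in three steps.

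First, restrict to a small disc $D$ around the origin on which $P,Q$ are analytic. For fixed $r$, $\theta\mapsto Q(r,\theta)$ is a real-analytic function on the circle. Since $Q(0,0)\ne0$, it is not identically zero for small $r$, so it has only finitely many zeros on $C_r$.

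Second, the invariance $\tilde I_2\circ f_\nu=\tilde I_2$ can be written as the analytic identity $P\cdot(Q\circ f_\nu)=Q\cdot(P\circ f_\nu)$, valid on all of $D$. After cancelling the common factors of $P$ and $Q$ along $C_r$, the set of genuine poles of $\tilde I_2|_{C_r}$ is a finite subset of $C_r$ that is invariant under the rotation by $\Omega(r^2)$. For $r\in R_{\text{irr}}$ this rotation has no finite invariant set other than the empty one, so $\tilde I_2|_{C_r}$ is continuous on the whole circle. The previous density argument then makes it constant on $C_r$.

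Third, $\tilde I_2$ is continuous on the open set where $Q\neq0$, and $R_{\text{irr}}$ is dense. Arguing as in the continuous case, $\partial_\theta\tilde I_2=0$ on that open dense set. Hence $d\tilde I_1\wedge d\tilde I_2=F'(r^2)\,2r\,dr\wedge\partial_\theta\tilde I_2\,d\theta=0$ there, which contradicts independence on any open set. The symplectic case requires no change, since only the form of $f_\nu$ was used.
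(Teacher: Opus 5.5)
Your proposal is correct and follows the paper's proof essentially step by step. The steps match: conjugation to $f_\nu(r,\theta)=(r,\theta+\Omega(r^2))$, density of irrational radii from the lemma, constancy of $\tilde I_2$ on irrational circles by density of orbits, extension to all radii by continuity, and, in the meromorphic case, vanishing of $\partial_\theta\tilde I_2$ on the open dense set $\{Q\neq0\}$.

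The only difference is how you handle poles on an irrational circle. The paper excises the finite set $\Sigma_r$ and the discrete set $B$ of radii where $Q(r,\cdot)\equiv0$. You instead use the cross-multiplied identity $P\cdot(Q\circ f_\nu)=Q\cdot(P\circ f_\nu)$ to show that the pole set of the restriction is a finite rotation-invariant set, hence empty. That is a slightly cleaner justification of the same step.

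One small imprecision: your remark that $Q(r,\cdot)\not\equiv0$ ``for small $r$'' does not cover radii where a whole circle lies in $\{Q=0\}$. Your local Step 3 does not actually need it, since $Q(r_0,\theta_0)\neq0$ already forces $Q(r_n,\cdot)\not\equiv0$ for $r_n$ near $r_0$.
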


\begin{proof}
We begin by using \ref{theorem a} (or, in the symplectic case, \ref{thm:symplectic_nonresonant}), to ensure that $f$ is real-analytically (symplectically) conjugate to the normal form:
\[
f_{\nu}(r,\theta) = (r,\theta+\Omega(r^2)),
\]
with $\Omega(0)=2\pi\omega_0$, $\Omega$ being a real-analytic function. By \ref{lem:dense_irrational_rotation_radii}, $R_{\text{irr}}$ (as defined there) is dense in $(0,\epsilon)$.

Let us first assume that a second, continuous / smooth / real-analytic, first integral $I_2$ exists and that it is functionally independent from $I_1$, on an open subset of $U\setminus \{(0,0)\}$. If we denote by $\phi$ the diffeomorphism (symplectomorphism) bringing $f$ to the normal form $f_{\nu}$, consider the function $\tilde I_2 := I_2\circ\phi^{-1}$, which is continuous and invariant under $f_{\nu}$.

For $r \in R_{\text{irr}}$, denote by $S_r$ the circle or radius $r$, centered at the origi. The restriction $f_{\nu}|_{S_r}$ is nothing more that an irrational rotation of the circle. Since $\tilde{I}_2$ should be constant on orbits of $f_{\nu}|_{S_r}$ and the orbits of irrational circle-rotations are dense, $\tilde I_2$ is constant on the whole $S_r$. It is thus a function of $r$ alone, say $H(r)$.

For any two fixed $\theta_1,\theta_2$, the function $r\mapsto \tilde I_2(r,\theta_1)-\tilde I_2(r,\theta_2)$ is continuous and vanishes on the (dense) set $R_{\text{irr}}$. Thus, it vanishes everywhere on $(0,\epsilon)$. But the values $\theta_1,\theta_2$ were arbitrarily chosen. We conclude that $\tilde I_2(r,\theta) = H(r)$ for every $(r,\theta)$ and thus cannot be independent of $I_1$, which, as we saw in the proof of \ref{theorem a}, also depends only on $r$ alone.

Let us now turn our attention to the case where the second integral, if it exists, is real-meromorphic. Write it as $\tilde I_2 = \tilde P/\tilde Q$, where $P,Q$ real analytic in $U$ and $Q(0,0)\neq 0$. The function $I_2$ is real-analytic on $V\setminus\Sigma$, $\Sigma$ being the zero-locus of $Q$. 

As before, $B := \{r : \tilde Q(r,\cdot)\equiv0 \text{ on } S_r\}$ is discrete and thus countable, so $R_{\text{irr}}\setminus B$ remains dense in $(0,\epsilon)$. For $r\in R_{\text{irr}}\setminus B$, the same argument we used in the previous case forces $\tilde I_2$ to be constant on $S_r\setminus\Sigma_r$ (we just have to avoid the finite set $\Sigma_r$ on that circle).

We get that $\partial_\theta\tilde I_2$ vanishes on the dense subset $(R_{\text{irr}}\setminus B)\times\mathbb{S}^1$ (intersected with $V\setminus\Sigma$). By continuity of $\partial_\theta\tilde I_2$, this derivative should vanish everywhere in the open set $V\setminus\Sigma$, which means that $\tilde I_2$ is a function of $r$ alone. Therefore, it cannot be functionally independent form $\tilde{I}_1$.
\end{proof}

Having completely examined the non-resonant case, we now turn our attention to the resonant one.

\section{In the presence of resonances}

Let us now turn our attention to the case where the rotation number is rational. The existence of an analytic first integral, once again, simplifies its normal form.

\begin{theorem}\label{theorem b}
Let $f$ be the germ of a real-analytic planar map, having an elliptic fixed point at the origin with rational rotation number $\omega_0 = \frac{p}{q}$, $\gcd(p,q)=1$, $q \ge 3$. Suppose $f$ admits a real-analytic first integral $I_1$, with a non-degenerate local minimum at $(0,0)$.

Then $f$ is real-analytically conjugate, in a neighbourhood of the origin, to the map:
\begin{equation}
\label{eq:resonant_circle_preserving}
f_{\nu \nu}(r,\theta) = \big(r,\ \theta + \Xi(r,\theta)\big),
\end{equation}
written in polar coordinates, where $\Xi$ is real-analytic, $2\pi/q$-periodic in $\theta$, with $\Xi(0,\theta) = \tfrac{2\pi p}{q}$.
\end{theorem}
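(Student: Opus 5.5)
The plan mirrors the proof of \ref{theorem a}, with one change: in the resonant case the first integral is only $\mathbb{Z}_q$-invariant, not $U(1)$-invariant. It is therefore more natural to build coordinates from $I_1$ itself than to expect the normal form to collapse to a rotation.

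\emph{Step 1 (convergent normal form).} By \ref{Poincare Birkhoff theorem} there is a formal, tangent-to-identity $\phi$ with $f_{NF}=\phi^{-1}\circ f\circ\phi$ in the resonant Poincar\'e normal form \eqref{res Poincare form}. Since $f$ admits the analytic first integral $I_1$, the result of \cite{Zung} makes $\phi$ convergent. Set $\tilde I_1=I_1\circ\phi$, which is an analytic first integral of $f_{NF}$ with a non-degenerate minimum at the origin.

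\emph{Step 2 (structure of $\tilde I_1$).} Since $f_{NF}$ commutes with $L$, the map $\tilde I_1\circ L$ is also a first integral. Averaging over the cyclic group, i.e.\ replacing $\tilde I_1$ by $\frac1q\sum_{s=0}^{q-1}\tilde I_1\circ L^s$, I may assume $\tilde I_1$ is $\mathbb{Z}_q$-invariant. The $2$-jet is $c|z|^2$ with $c>0$ (degree-two terms satisfy $c_{jk}(\mu^{j-k}-1)=0$ and $q\ge3$), so it survives the averaging. Hence $\tilde I_1=c|z|^2+\mathcal{O}(|z|^3)$, containing only monomials $z^j\bar z^k$ with $j\equiv k \pmod q$.

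\emph{Step 3 (Morse coordinates).} Apply the analytic Morse lemma in a $\mathbb{Z}_q$-equivariant form. Define $\rho(z)=\sqrt{\tilde I_1(z)/c}$ and $\psi(z)=z\,\rho(z)/|z|$. Because $\tilde I_1/(c|z|^2)=1+\mathcal{O}(|z|)$ is analytic in $(x,y)$ away from problems at the origin, a standard argument (for instance Moser's path method, or writing $\tilde I_1=c\,|z|^2 u(z,\bar z)$ with $u$ analytic and $u(0)=1$) produces an analytic, tangent-to-identity $\psi$ with $|\psi(z)|^2=\tilde I_1(z)/c$. Since $\tilde I_1$ is $\mathbb{Z}_q$-invariant, $\psi$ commutes with $L$ (it is built by radial rescaling, or equivalently by an averaged path method). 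In the new coordinates $w=\psi(z)$ the map $g=\psi\circ f_{NF}\circ\psi^{-1}$ preserves $|w|^2$ and commutes with $L$. Writing it in polar coordinates, $g$ preserves $r$, so $g(r,\theta)=(r,\theta+\Xi(r,\theta))$ for some analytic $\Xi$. Equivariance under $L$ (rotation by $2\pi p/q$, with $p$ invertible mod $q$) gives $2\pi/q$-periodicity in $\theta$. The linear part $\mu w$ gives $\Xi(0,\theta)=2\pi p/q$.

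\emph{Main obstacle.} The delicate point is the analyticity of $\Xi$, and the behaviour of $\theta$, at $r=0$ in Step 3, together with the claim that the factor $u$ in $\tilde I_1=c|z|^2u$ is analytic. The latter fails for odd-degree terms such as $z^3$ when $q=3$: then $\tilde I_1-c|z|^2$ need not be divisible by $|z|^2$. To handle this I would avoid requiring divisibility. Instead I would take the generalized Morse lemma in the form $\tilde I_1=c|\psi(z)|^2$ with $\psi$ analytic, obtained from Moser's homotopy $I_t=c|z|^2+t(\tilde I_1-c|z|^2)$, and average the resulting vector field to keep $L$-equivariance. For $\Xi$ I would write $g(w)=w\,e^{i\Xi}$ with $e^{i\Xi}=g(w)/w$, which is analytic in $(r,\theta)$ for $r\ge0$, possibly taking values in a polar-coordinate sense only. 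That is presumably the sense intended in \eqref{eq:resonant_circle_preserving}.
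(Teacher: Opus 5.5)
Your argument is correct and follows essentially the paper's route. The steps are: convergence of the normal-form change of coordinates by Zung's theorem, $\mathbb{Z}_q$-invariance of the transformed integral, an equivariant Morse lemma, and reading off $(r,\theta+\Xi)$ from $|g(w)|=|w|$ and $g\circ L=L\circ g$. Your averaged Moser path method is precisely a proof of the equivariant Morse lemma that the paper simply cites from Arnold. You are also right that the naive radial rescaling $z\sqrt{\tilde I_1/(c|z|^2)}$ is not analytic once terms such as $\operatorname{Re} z^{q}$ occur.

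The only variation is how you get invariance: you average over $\langle L\rangle$, while the paper's degree-by-degree induction shows that every analytic first integral of $f_{NF}$ is already $\mathbb{Z}_q$-invariant. So averaging changes nothing, and the original $I_1$ itself becomes $|w|^2$. This stronger fact is what the paper reuses for $I_2$ in \ref{theorem c}.
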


\begin{proof}
By \ref{Poincare Birkhoff theorem}, there exists a local, formal, near-identity change of coordinates $\phi$ conjugating $f$ to its resonant Poincar\'{e} normal form \eqref{res Poincare form}. The existence of a real-analytic first integral guarantees (see \cite{Zung}) that $\phi$ is indeed a local real-analytic diffeomorphism.

The transformed integral $\tilde I_1 := I_1 \circ \phi^{-1}$ is real-analytic, with a non-degenerate minimum at the origin, and $\tilde I_1 \circ f_{\text{NF}} = \tilde I_1$.

We expand this first integral as $\tilde I_1 = \sum_{j,k\ge0} c_{jk} w^j\bar w^k$ and we shall show, using induction, that $c_{jk}=0$, for $j \not\equiv k \pmod q$. 

Matching the degree-$2$ terms of the relation $\tilde I_1(f_{\text{NF}}(w,\bar w)) = \tilde I_1(w,\bar w)$ (substituting $f_{\text{NF}}(w,\bar w) = \mu w + \mathcal{O}(|w|^2)$) gives us:
\[
c_{jk}(\mu^{j-k}-1)=0,\ \text{for}\ j+k=2.
\]
But $\mu^{j-k}=1$ if, and only if, $j\equiv k \pmod q$, which means that $c_{2,0}=c_{0,2}=0$ (recall that $q \ge 3$).

Now, let us suppose that our claim holds for all degrees $N' < N$. Writing:
\[
\tilde I_1 = \tilde I_1^{<N} + \tilde I_1^{(N)} + \tilde I_1^{>N},
\]
the terms $\tilde I_1^{<N}$ are functions of the $\mathbb{Z}_q$-invariant monomials $w^j\bar w^k$ with $j\equiv k \pmod q$. Since $f_{\text{NF}}(\mu w,\bar\mu\bar w) = \mu f_{\text{NF}}(w,\bar w)$, the composition of any such function with $f_{\text{NF}}$ is a function of $\mathbb{Z}_q$-invariant monomials. Hence the only non-$\mathbb{Z}_q$-invariant terms in the degree-$N$ part of $\tilde I_1(f_{\text{NF}})$ comes from $\tilde I_1^{(N)}$, evaluated on the linear part $\mu w$ of $f_{\text{NF}}$, giving, as before, $c_{jk}(\mu^{j-k}-1)=0$, for $j+k=N$. This forces $c_{jk}=0$, whenever $j\not\equiv k\pmod q$ at this degree and, by induction, $c_{jk}=0$ for all $j \not\equiv k \pmod q$. It therefore is:
\[
\tilde I_1(\mu w, \bar\mu\bar w) = \sum_{j= k\ mod\ q} c_{jk}\mu^{j-k}w^j\bar w^k = \sum_{j= k,\ mod\ q} c_{jk}w^j\bar w^k = \tilde I_1(w,\bar w),
\]
i.e.\ $\tilde I_1$ is invariant under the linear rotation $w \mapsto \mu w$, and hence under the whole cyclic group $\mathbb{Z}_q = \langle \mu \rangle$ generated by it (as $\gcd(p,q)=1$).

Now, recall that $\phi$ is tangent to the identity and thus it preserves $2$-jets, so $d^2\tilde I_1(0) = d^2 I_1(0)$, which (as we saw above) equals $c(x^2+y^2)$ with $c>0$. As $\mathbb{Z}_q$ is a finite (hence compact) group, acting linearly by rotations, and $\tilde I_1$ is a $\mathbb{Z}_q$-invariant, real-analytic, function with a non-degenerate critical point at the origin, we can use the equivariant Morse Lemma (see \cite{Arnold2}) to ensure that a real-analytic, $\mathbb{Z}_q$-equivariant diffeomorphism $G$ exists, in a neighbourhood of the origin, which is tangent to $w \mapsto \sqrt{c}\,w$ there, such that:
\[
\tilde I_1 \circ G^{-1}(w,\bar w) = |w|^2.
\]

Set $f_{\text{NF}}' := G \circ f_{\text{NF}} \circ G^{-1}$. We compute that:
\begin{equation*}
\begin{split}
f_{\text{NF}}'(\mu w) =& G\big(f_{\text{NF}}(G^{-1}(\mu w))\big) = G\big(f_{\text{NF}}(\mu\, G^{-1}(w))\big) =\\
=& G\big(\mu\, f_{\text{NF}}(G^{-1}(w))\big) = \mu\, G\big(f_{\text{NF}}(G^{-1}(w))\big) = \mu\, f_{\text{NF}}'(w),
\end{split}
\end{equation*}
which means that $f_{\text{NF}}'$ also commutes with $L$. Moreover, since $\tilde I_1 \circ f_{\text{NF}} = \tilde I_1$ and $\tilde I_1'=\tilde I_1 \circ G^{-1} = |w|^2$, we get:
\[
\tilde I_1'(f_{\text{NF}}'(w)) = \tilde I_1\big(G^{-1}(f_{\text{NF}}'(w))\big) = \tilde I_1\big(f_{\text{NF}}(G^{-1}(w))\big) = \tilde I_1(G^{-1}(w)) = \tilde I_1'(w),
\]
i.e.\ $|f_{\text{NF}}'(w)|^2 = |w|^2$.

Writing $f_{\text{NF}}'$ in polar coordinates, its radial component is therefore $r$, while its angular component has the form $\theta + \Xi(r,\theta)$ for some real-analytic $\Xi$. Since $d_{(0,0)}f_{\text{NF}}' = L$,  we get $\Xi(0,\theta) = 2\pi p/q$ and finally, due to the fact that $f_{\text{NF}}'$ commutes with $L$ and $L$ acts on $\theta$ as a shift by $2\pi p /q$, it follows that $\Xi$ is $2\pi/q$-periodic in $\theta$.

This gives us the normal form $f_{\nu \nu}$.
\end{proof}

We now proceed to prove that the existence of a second, real-analytic, first integral linearises $f$.

\begin{theorem}\label{theorem c}
Let $f$ be a germ as in \ref{theorem b}. Suppose that $f$ admits a second real-analytic first integral $I_2$, functionally independent of $I_1$ in an open and dense subset of $U$.

Then $f$ is real-analytically conjugate to its linear part.
\end{theorem}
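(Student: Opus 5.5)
We begin by invoking \ref{theorem b}, which conjugates $f$ real-analytically to $f_{\nu\nu}(r,\theta)=(r,\theta+\Xi(r,\theta))$. Here $\tilde I_1=r^2$ and $\Xi$ is $2\pi/q$-periodic in $\theta$, with $\Xi(0,\theta)=2\pi p/q$. Set $\tilde I_2:=I_2\circ\phi^{-1}\circ G^{-1}$, where $\phi$ and $G$ are the conjugacies from that proof. Then $\tilde I_2$ is real-analytic and $f_{\nu\nu}$-invariant, and it is independent of $r^2$ on an open dense set. In particular $\partial_\theta\tilde I_2\not\equiv0$.

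\textbf{Each invariant circle is periodic.} Every circle $S_r$ is invariant, and $f_{\nu\nu}|_{S_r}$ is an analytic circle diffeomorphism $g_r$. We first claim that $g_r$ has rational rotation number for every $r$. Suppose $g_r$ had irrational rotation number. Since $g_r$ is analytic, Denjoy's theorem makes it conjugate to an irrational rotation, so all its orbits are dense. Then $\tilde I_2|_{S_r}$ would be constant, and $\partial_\theta\tilde I_2$ would vanish on $S_r$. The rotation number $\rho(r)$ depends continuously on $r$. If it were non-constant, it would take irrational values on a set of radii that is dense in some interval. There $\partial_\theta\tilde I_2=0$, and by analyticity $\tilde I_2$ would then depend on $r$ alone, contradicting independence. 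Hence $\rho\equiv p/q$, because $\rho(0)=p/q$.

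Next we show that $g_r^q=\mathrm{id}$ for every $r$. Take $r$ in the dense set where $\tilde I_2(r,\cdot)$ is non-constant. The map $\theta\mapsto\tilde I_2(r,\theta)$ is a nonconstant analytic function on the circle, so each of its level sets is finite. The map $g_r^q$ has rotation number $0$ and preserves these level sets. A lift of a degree-one homeomorphism with rotation number $0$ that permutes a finite invariant set must fix every point of that set, since it preserves cyclic order and has a fixed point. So every point of $S_r$, lying in some finite level set, is fixed by $g_r^q$. Thus $g_r^q=\mathrm{id}$ on a dense set of circles, and by analyticity $f_{\nu\nu}^q=\mathrm{id}$ on a neighbourhood of the origin. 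I expect this step to be the main obstacle. One must argue carefully that the orbits inside the finite level sets are genuinely fixed and not merely permuted, and must control the radii where $\tilde I_2(r,\cdot)$ is constant. The latter form a closed analytic set, hence a nowhere dense one.

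\textbf{Linearising.} Since $f^q=\mathrm{id}$, $f$ generates an action of the finite group $\mathbb{Z}_q$, and $d_{0}f=L$. Bochner's linearisation applies: the averaged map
\[
h:=\frac1q\sum_{k=0}^{q-1}L^{-k}\circ f^{k}
\]
is real-analytic with $d_0h=\mathrm{Id}$, and it satisfies $h\circ f=L\circ h$. Hence $h$ is a local real-analytic diffeomorphism conjugating $f$ to $L$. Composing with the earlier conjugacies gives the claim.
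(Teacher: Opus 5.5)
Your proof is correct in substance, but it takes a genuinely different route from the paper.

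\textbf{The paper's route.} The paper works entirely with Taylor series. It shows that the coefficients $c_{jk}$ of $I_2$ vanish unless $q\mid j-k$. It then subtracts $\Psi(I_1)$ to get $J=\sum_{m\ge1}2\operatorname{Re}[E_m(|w|^2)w^{mq}]$ and takes the lowest degree $d^*$ occurring in $J$. By induction on $D$, it shows that each homogeneous piece $r^D\varphi_D(\theta)$ of the angular perturbation vanishes: the coefficient of $r^{d^*+D}$ in $J\circ f_{\nu\nu}-J$ is $\varphi_D K$, and $K$ is a nonzero trigonometric polynomial. So $f_{\nu\nu}$ is already the rotation in the coordinates of \ref{theorem b}.

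\textbf{Your route.} You argue dynamically. Analyticity gives $\tilde I_2$ finite level sets on every circle outside a discrete set of radii, so all orbits there are finite. Rotation-number theory then yields $f^q=\mathrm{id}$, and Bochner averaging linearises.

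\textbf{What each buys.} Your route exposes the mechanism: two independent analytic integrals force periodicity. It also avoids the lowest-order bookkeeping. The paper's route only uses the formal Taylor series of $I_2$, which is why it carries over to the smooth case of \ref{thm:smooth_resonant}. Your argument uses real-analyticity of $I_2$ essentially, through finite level sets on circles and the identity theorem in $r$. It would therefore not cover a merely smooth $I_2$.

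\textbf{A false sentence in your first step.} A continuous, non-constant rotation number need not take irrational values on a set dense in any interval. Devil's-staircase behaviour, as in the Arnold family, can confine the irrational values to a nowhere dense set. Your conclusion survives through the analyticity you already invoke. The image of $\rho$ contains an interval, so the set of radii with irrational rotation number is uncountable. It therefore has an accumulation point in $(0,\epsilon)$. For each fixed $\theta$, the function $r\mapsto\partial_\theta\tilde I_2(r,\theta)$ is real-analytic on $(0,\epsilon)$ and vanishes on that set, hence identically.

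\textbf{Two simplifications.} You can drop Denjoy altogether. On each component of the complement of the discrete set of bad radii, every orbit is finite, so $\rho$ is rational-valued and continuous, hence constant. Continuity across the bad radii and at $r=0$ then gives $\rho\equiv p/q$. Bochner is also not strictly needed. Since $\Xi$ is $2\pi/q$-periodic in $\theta$, $g_r$ descends to a homeomorphism of $S_r/\mathbb{Z}_q$ with rotation number $0$ whose $q$-th iterate is the identity. That homeomorphism is therefore the identity, which forces $\Xi\equiv 2\pi p/q$. This recovers the paper's sharper conclusion that $f_{\nu\nu}$ itself is linear.
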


\begin{proof}
\ref{theorem b} ensures us that $f$ is real-analytically conjugate to:
\[
f_{\nu \nu}(w,\bar w) = w\,e^{i(2\pi p/q + \Phi(w,\bar w))}
\]
and that, in these normal form coordinates, $I_1$ assumes the form $|w|^2$. Here, $\Phi$ is a real-valued, real-analytic function, $2\pi/q$-periodic in $\theta$ ($w=re^{i\theta})$, satisfying $\Phi(0,0)=0$.

Since $|f_{\nu \nu}(w)|=|w|$, it suffices to show that $\Phi\equiv0$.

We expand the second first integral as follows:
\[
I_2(w,\bar w) = \sum_{j,k\ge0} c_{jk}w^j\bar w^k,
\]
and we claim that $c_{jk}=0$ whenever $j-k$ is not a multiple of $q$.

We now collect the terms of degree $N=j+k$ in the relation $I_2(f_\nu(w))=I_2(w)$, exactly as we did in the proof of \ref{theorem b}. Once again, we get:
\[
c_{jk}(\mu^{j-k}-1)=0.
\]
Due to the form of $\mu$, the equation $\mu^{j-k}=1$ holds if, and only if, $j-k$ is a multiple of $q$. Thus, $c_{jk}=0$ whenever $j-k \notin q\mathbb{Z}$.

Since $I_2$ contains only terms with $j-k=mq$, $m\in\mathbb{Z}$, we can write it as follows:
\[
I_2(w,\bar w) = \sum_{k\ge0} c_{kk}|w|^{2k} \;+\; \sum_{m\ge1}\Big(c_{k+mq,k}w^{mq} + c_{k,k+mq}\bar w^{mq}\Big)|w|^{2k},
\]
with $c_{k,k+mq}=\overline{c_{k+mq,k}}$. Define $\Psi(s) := \sum_k c_{kk}s^k$. Since $I_1=|w|^2$, the function $\Psi(I_1)$ is a first integral of $f_{\nu \nu}$ as well and, hence, so is the difference:
\begin{equation*}
\begin{split}
J(w,\bar w) :=& I_2(w,\bar w) - \Psi(I_1) = \sum_{m\ge1} 2\operatorname{Re}\big[E_m(|w|^2)\,w^{mq}\big],\\
& \text{where}\ E_m(s) := \sum_{k\ge0} c_{k+mq,k}\,s^k.
\end{split}
\end{equation*}

For convenience, we write $E_m(s) = \sum_{k\ge0} e_{m,k}s^k$. $J$ is not identically zero (if it was, $I_2$ would be functionally dependent to $I_1$) and real-analytic; thus not every $e_{m,k}$ vanishes. We name:
\[
d^* := \min\{\,mq+2k : m\ge1,\ k\ge0,\ e_{m,k}\neq0\,\},
\]
that is, the total degree of the lowest-order non-zero term of $J$.

Since $|f_{\nu \nu}(w)|=|w|$, we have $f_{\nu \nu}(w)^{mq} = w^{mq}e^{imq\Phi(w,\bar w)}$ for every $m\ge1$. As $J\circ f_{\nu \nu} = J$, this reads as (again,  $w=re^{i\theta}$):
\begin{equation}
\label{eq:master_c}
\sum_{m\ge1,\,k\ge0} 2\operatorname{Re}\!\left[e_{m,k}\,r^{2k}\,w^{mq}\left(e^{imq\Phi(r,\theta)}-1\right)\right] \equiv 0.
\end{equation}

We now write $\Phi$ as $\Phi = \sum_{D\ge1}\Phi_D$, where $\Phi_D(r,\theta) = r^D\varphi_D(\theta)$ stands for all monomials of degree $D$. We shall show that, $\forall D\ge1$, $\Phi_D\equiv0$.

Assume that $\Phi_{D'}\equiv0$, for all $D'<D$ and thus $\Phi(r,\theta) = r^D\varphi_D(\theta) + O(r^{D+1})$. The Taylor series of the exponential ensures us that:
\[
e^{imq\Phi(r,\theta)} - 1 = imq\,r^D\varphi_D(\theta) + O(r^{D+1}),
\]
since $D\ge1$.

We substitute this into \eqref{eq:master_c} and collect the coefficient of $r^{d^*+D}$. A few calculations give us the relation:
\[
\varphi_D(\theta)\cdot K(\theta) \equiv 0, \quad K(\theta) := \sum_{\substack{m,k:\ 2k+mq=d^*\\ e_{m,k}\neq0}} 2mq\operatorname{Re}\big[i\,e_{m,k}\,e^{imq\theta}\big].
\]
Note that $K$ is a finite sum, since only finitely many pairs $(m,k)$ with $m\ge1,k\ge0$ satisfy the equation $2k+mq=d^*$.

It is $K\not\equiv0$. Indeed, multiply $K(\theta)$ by $e^{-ilq\theta}$ and integrate with respect to $\theta\in[0,2\pi]$. Since:
\[
\operatorname{Re}[i\,e_{m,k}e^{imq\theta}] = \tfrac12\big(i\,e_{m,k}e^{imq\theta} - i\,\overline{e_{m,k}}e^{-imq\theta}\big),
\]
and
\[
\frac{1}{2\pi}\int_0^{2\pi} e^{i(m-l)q\theta}\,\mathrm{d}\theta = \begin{cases} 1, & m=l \\ 0, & m\neq l\end{cases}, \quad \frac{1}{2\pi}\int_0^{2\pi} e^{-i(m+l)q\theta}\,\mathrm{d}\theta = 0, \ \ (m,l\ge1),
\]
we find that, $\forall l\ge1$:,
\[
\frac{1}{2\pi}\int_0^{2\pi} K(\theta)\,e^{-ilq\theta}\,\mathrm{d}\theta = ilq\,e_{l,k}.
\]
This is non-zero, since $e_{l,k}\neq0$ for at least one such pair $(l,k)$, by the definition of $d^*$. Hence $K\not\equiv0$.

The function $K(\theta)$ is not identically zero and therefore it must have finitely many zeros in $[0,2\pi)$ (recall that it is analytic). Due to the relation $\varphi_D(\theta)K(\theta)\equiv0$, $\varphi_D$ and the fact that $\varphi_D$ is continue, we must have $\varphi_D\equiv0$.

The result follows.
\end{proof}

The next two corollaries state that linearisation occurs even if $I_2$ is not real-analytic, but merely real-meromorphic (having no pole at the origin), or smooth.

\begin{corollary}\label{cor:meromorphic_resonant}
Let $f$ be a germ as in \ref{theorem b}. If $f$ admits a second first integral $I_2$, real-meromorphic on $U$, having no pole at the origin, functionally independent of $I_1$ in an open and dense subset of $U \setminus \{(0,0)\}$, then $f$ is real-analytically conjugate to its linear part.
\end{corollary}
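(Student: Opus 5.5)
The plan is to reduce the meromorphic case to the real-analytic one already settled in \ref{theorem c}. Write $I_2 = P/Q$ with $P,Q$ real-analytic on $U$ and $Q(0,0)\neq 0$. Since the origin is not a pole, $I_2$ is in fact real-analytic on a small neighbourhood $U_0\subset U$ of the origin. Shrinking $U_0$ further if necessary, we may assume that $U_0$ is $f$-invariant. To arrange this, use \ref{theorem b}: in the normal form coordinates $f_{\nu\nu}$ preserves every circle $|w|=r$, so the preimage under the conjugating diffeomorphism of a small disc centred at the origin is an $f$-invariant neighbourhood on which $Q$ does not vanish. On that neighbourhood $I_2 = P/Q$ is a genuine real-analytic function, and the relation $I_2\circ f = I_2$ continues to hold there.

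The remaining hypothesis of \ref{theorem c} is functional independence of $I_1$ and $I_2$ on an open dense subset of $U_0$. Let $W$ be the open dense subset of $U\setminus\{(0,0)\}$ on which $dI_1\wedge dI_2\neq 0$. Then $W\cap U_0$ is open and dense in $U_0$, because $U_0$ is open and the origin is a single point. Alternatively, one can observe that $dI_1\wedge dI_2$ is real-analytic on the connected set $U_0\setminus\{Q=0\}$. It is not identically zero there, since it is nonzero on an open set, so its zero set is a proper analytic subset and its complement is open and dense. Either way, the conclusion of \ref{theorem c} applies to the restriction of $f$ to $U_0$, and $f$ is real-analytically conjugate to its linear part near the origin. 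This is exactly the germ statement we want.

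The main obstacle is making sure the independence hypothesis survives the restriction to the small neighbourhood. Density of $W$ in $U$ only guarantees that $W$ meets $U_0$, and that is enough here because density is inherited by open subsets. If instead the hypothesis were read as independence only on some open subset far from the origin, I would use analytic continuation. The function $dI_1\wedge dI_2 = (Q\,dI_1\wedge dP - P\,dI_1\wedge dQ)/Q^2$ has a real-analytic numerator on the connected set $U$. If this numerator vanished identically near the origin, it would vanish on all of $U$, contradicting independence somewhere. So it does not vanish identically on $U_0$, and its nonvanishing set there is open and dense.

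Should one prefer not to invoke \ref{theorem c} verbatim, a fallback is to rerun its proof directly. The Taylor expansion of $P/Q$ at the origin is the convergent series of a real-analytic function, so the coefficient argument (the $\mathbb{Z}_q$-invariance, the construction of $J$, and the computation of $K(\theta)$) goes through unchanged and forces $\Phi\equiv 0$.
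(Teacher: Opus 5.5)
Your proposal is correct and is essentially the paper's proof: you restrict to a neighbourhood of the origin on which $Q\neq 0$, note that $I_2=P/Q$ is real-analytic there and that independence on an open dense set is inherited by that neighbourhood, and then invoke \ref{theorem c}. Your extra steps (choosing the neighbourhood $f$-invariant via \ref{theorem b}, and the identity-theorem argument for density of the set where $dI_1\wedge dI_2\neq 0$) only make explicit what the paper leaves implicit.
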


\begin{proof}
Write $I_2=P/Q$, with $P,Q$ real analytic functions and $Q(0,0)\neq 0$. There exists a neighbourhood $U' \subseteq U$ of the origin, in which $Q \neq 0$; hence $I_2$ is real-analytic on $U'$, and functionally independent of $I_1$ on a dense open subset of $U'\setminus\{(0,0)\}$. The hypotheses of \ref{theorem c} are satisfied with $U'$ in place of $U$, and the conclusion follows.
\end{proof}

\begin{corollary}\label{thm:smooth_resonant}
Let $f$ be a germ as in \ref{theorem b}. If $f$ admits a second first integral $I_2 \in C^\infty(U)$, the Taylor series of which does not depend only on $r^2$, then $f$ is real-analytically conjugate to its linear part.
\end{corollary}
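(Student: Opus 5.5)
The plan is to rerun the proof of \ref{theorem c} at the level of formal Taylor series, since that argument only ever manipulates power-series coefficients. First, by \ref{theorem b}, $f$ is real-analytically conjugate to $f_{\nu\nu}(w,\bar w)=w\,e^{i(2\pi p/q+\Phi(w,\bar w))}$, with $I_1$ taking the form $|w|^2$. Composing $I_2$ with this analytic conjugacy gives a $C^\infty$ function $\tilde I_2$ that is invariant under $f_{\nu\nu}$. Its Taylor series $T\tilde I_2$ at the origin is still not a series in $r^2$ alone, because the conjugacy is tangent to a map that commutes with rotations up to the Morse rescaling. This point needs a short check: a formal series is a function of $|w|^2$ exactly when it is $U(1)$-invariant, and I would verify that this property, or its failure, is carried over appropriately. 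If the check is delicate, I would instead read the hypothesis of the corollary as being stated in the normal-form coordinates.

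Second, taking Taylor series is compatible with composition by the analytic map $f_{\nu\nu}$, since the jet of a composition depends only on the jets of the factors. Hence the identity $\tilde I_2\circ f_{\nu\nu}=\tilde I_2$ gives the formal identity $T\tilde I_2\circ f_{\nu\nu}=T\tilde I_2$. The degree-by-degree induction in the proof of \ref{theorem c} then applies verbatim to the formal series. Coefficients $c_{jk}$ with $j\not\equiv k \pmod q$ vanish, and subtracting the formal series $\Psi(|w|^2)$ leaves a formal series
\[
J=\sum_{m\ge1}2\operatorname{Re}\big[E_m(|w|^2)w^{mq}\big].
\]
Here $J\neq0$ exactly by the hypothesis that $T\tilde I_2$ does not depend only on $r^2$. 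This replaces the functional-independence assumption used in \ref{theorem c}.

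Third, define $d^*$ as before and carry out the induction on $D$. Assuming $\Phi_{D'}\equiv0$ for $D'<D$, collecting the coefficient of $r^{d^*+D}$ in the formal identity \eqref{eq:master_c} gives $\varphi_D(\theta)K(\theta)\equiv0$ with $K\not\equiv0$. Therefore $\varphi_D\equiv0$. Only finitely many terms contribute at each order, so the argument is purely formal and never needs convergence of $T\tilde I_2$.

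The conclusion is that every Taylor coefficient of $\Phi$ vanishes. Since $\Phi$ is real-analytic, $\Phi\equiv0$, so $f_{\nu\nu}$ is the rotation by $2\pi p/q$, which is the linear part of $f$. The main obstacle is the first step: ensuring that the non-radial nature of the Taylor series survives the change to normal-form coordinates, and that the step $\varphi_D K\equiv0\Rightarrow\varphi_D\equiv0$ is legitimate. For the latter, $\varphi_D$ is a trigonometric polynomial, so it is continuous and analytic, and it must vanish off the finitely many zeros of $K$, hence everywhere.
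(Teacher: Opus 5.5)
Your argument works once the hypothesis is read in the coordinates of \ref{theorem b}. However, the justification you first give for transferring non-radiality is false, and the fallback reading you mention is in fact forced. A near-identity conjugacy controls only the lowest-degree homogeneous part of a Taylor series. The radial lower-order terms, composed with the nonlinear part of the conjugacy, produce non-radial terms at higher degree, and these can create or cancel those of $I_2$. Concretely, take $g(w)=w\,e^{i(2\pi p/q+|w|^2)}$, $\psi(w)=w+\bar w^2$ and $f=\psi\circ g\circ\psi^{-1}$. Put $I_2=I_1=|\psi^{-1}(z)|^2=|z|^2-2\operatorname{Re}(z^3)+O(|z|^4)$. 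In the original coordinates the Taylor series of $I_2$ is not a series in $r^2$. Yet $f^q$ is conjugate to $w\mapsto w\,e^{iq|w|^2}\neq\mathrm{id}$, so $f$ is not conjugate to its linear part. The hypothesis must therefore mean that the Taylor series of $I_2$ is not a formal function of that of $I_1$. This condition is coordinate-free, and in the coordinates where $I_1=|w|^2$ it is exactly your condition $J\neq0$. The paper silently uses the same reading, since the proof of \ref{theorem c} expands $I_2$ directly in normal-form coordinates.

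With that reading, your route differs from the paper's in one respect. The paper keeps the analytic proof intact by invoking Borel's theorem to realise the formal series $\Psi$ as a smooth function. This makes $J=I_2-\Psi(I_1)$ a genuine smooth first integral, and the paper then declares the rest unchanged. You never construct such a function. Instead you pass to Taylor series at once, using that the jet of $\tilde I_2\circ f_{\nu\nu}$ is $T\tilde I_2\circ f_{\nu\nu}$, and do all the matching formally. Your version is more economical and more transparent. Everything after the definition of $J$ in the proof of \ref{theorem c} is coefficient matching: the vanishing of $c_{jk}$ for $j\not\equiv k \pmod q$, the choice of $d^*$, and reading off the coefficient of $r^{d^*+D}$. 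For a merely smooth $J$ this matching has to be done on its Taylor series anyway, so the smooth function supplied by Borel's theorem is not really needed. The paper's approach buys only a word-for-word parallel with the analytic case.
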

\begin{proof}
The proof of the \ref{theorem c} can be used to deduce the conclusion. The only difference is that, in \ref{theorem c}, the function $\Psi$, and thus $I_2-\Psi$, was real-analytic. Here, we have first to consider $\tilde \Psi$ as a formal power series and then use Borel's theorem (see \cite{Brocker}) to make sure that a smooth function $\Psi$ exists having $\tilde{\Psi}$ as its Taylor series. Thus $I_2-\Psi$ is smooth.

Everything else remains exactly the same.
\end{proof}

We now derive the expression of a symplectomorphism at a resonant fixed point, in the presence of an analytic first integral.

\begin{theorem}\label{thm:symplectic_resonant_twist}
Let $f$ be the germ of a real-analytic, symplectic, planar map with an elliptic fixed point at the origin, having rational rotation number $\omega_0 = \frac{p}{q}$, $\gcd(p,q) = 1$, $q \ge 3$. Suppose $f$ admits a real-analytic first integral $I_1$ with a non-degenerate local minimum at $(0,0)$.

Then $f$ is real-analytically, symplectically, conjugate, in a neighbourhood of the origin, to the map:
\begin{equation}
\label{eq:symplectic_resonant_twist}
f_{\nu \nu \nu}(r,\theta) = \big(r,\ \theta + \Omega(r^2)\big),
\end{equation}
where $\Omega$ is real-analytic with $\Omega(0) = \tfrac{2\pi p}{q}$.
\end{theorem}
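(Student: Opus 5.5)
We follow the proof of \ref{theorem b}, but keep every coordinate change symplectic and then use area preservation to remove the $\theta$-dependence.

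\textbf{Step 1: analytic normalisation.} By \ref{Poincare Birkhoff theorem}, a formal symplectic near-identity $\phi$ conjugates $f$ to the resonant Birkhoff normal form \eqref{res Birkhoff form}. The analytic first integral $I_1$ makes $\phi$ convergent (\cite{Zung}). The argument of \ref{theorem b} then applies verbatim to $\tilde I_1 = I_1\circ\phi^{-1}$: the induction on degrees shows that $\tilde I_1$ is $\mathbb{Z}_q$-invariant, with Hessian $c(x^2+y^2)$, $c>0$.

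\textbf{Step 2: symplectic replacement of the Morse lemma.} The equivariant Morse lemma gives $\tilde I_1\circ G^{-1}=|w|^2$, but $G$ need not be symplectic, so I would not use it. Instead I will use action-angle coordinates. The level sets $\{\tilde I_1=h\}$ are analytic closed curves around the origin, bounding area $A(h)$. The analytic Morse lemma (or Vey's theorem in dimension one) gives a real-analytic function $F$, with $F(0)=0$ and $F'(0)>0$, and a real-analytic symplectomorphism $\psi$, tangent to a linear symplectic map, such that $\tilde I_1\circ\psi^{-1}=F(|w|^2)$.

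Concretely, $F$ is determined by $\pi F^{-1}(h)=A(h)$. The map $\psi$ is built by flowing along the Hamiltonian vector field of $\tilde I_1$ normalised by the period. The period function is analytic because the minimum is non-degenerate.

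Since $\tilde I_1$ is $\mathbb{Z}_q$-invariant and the rotation $L$ is symplectic, $\psi$ can be chosen $\mathbb{Z}_q$-equivariant. To do this, choose the zero angle section $L$-equivariantly, or average $\psi$ over the group and re-symplectify via the Moser path method, which preserves equivariance.

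\textbf{Step 3: the conjugated map preserves $r$.} Set $g=\psi\circ f_{NF}\circ\psi^{-1}$. Then $g$ is symplectic and commutes with $L$. Since $F$ is strictly monotone, $|g(w)|=|w|$. Hence $g(r,\theta)=(r,\theta+\Xi(r,\theta))$, with $\Xi$ analytic, $2\pi/q$-periodic in $\theta$, and $\Xi(0,\theta)=2\pi p/q$, exactly as in \ref{theorem b}.

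\textbf{Step 4: area preservation forces $\Xi=\Omega(r^2)$.} The symplectic form is $r\,dr\wedge d\theta$. For a map $(r,\theta)\mapsto(r,\theta+\Xi)$ the Jacobian determinant is $1+\partial_\theta\Xi$. So $\det Dg\equiv 1$ forces $\partial_\theta\Xi\equiv0$, and $\Xi=\Xi(r)$.

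It remains to see that $\Xi$ is analytic in $r^2$ rather than merely in $r$. In the Cartesian variables $(w,\bar w)$ the map is $g(w)=w\,e^{i\Xi}$, with $\Xi$ a real-analytic function of $(w,\bar w)$ depending only on $|w|$. Such a function is an analytic function of $|w|^2$: a radial real-analytic function has only even powers of $r$ in its Taylor series. This gives \eqref{eq:symplectic_resonant_twist}.

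\textbf{Main obstacle.} The delicate part is Step 2: producing a symplectic, $\mathbb{Z}_q$-equivariant, real-analytic normalisation of $\tilde I_1$, which replaces the non-symplectic equivariant Morse lemma. The first approach to try is the analytic action-angle (Vey/Eliasson-type) construction in one degree of freedom, with equivariance obtained by choosing the angle origin compatibly with $L$. Note that equivariance is not strictly needed for the conclusion. Step 4 only uses $|g(w)|=|w|$ together with area preservation, so any analytic symplectic $\psi$ with $\tilde I_1\circ\psi^{-1}=F(|w|^2)$ suffices.
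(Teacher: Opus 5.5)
Your proposal is correct and follows the paper's proof essentially step for step. The steps are the same: analytic symplectic Birkhoff normalisation via Zung's convergence result, then a symplectic action--angle normalisation $\tilde I_1\circ G^{-1}=F(|w|^2)$ in place of the non-symplectic equivariant Morse lemma, then $|g(w)|=|w|$ from the strict monotonicity of $F$, and finally $\partial_\theta\Xi\equiv0$ from $r(1+\partial_\theta\Xi)\,dr\wedge d\theta=r\,dr\wedge d\theta$. Your extra remarks are sound refinements the paper leaves implicit: equivariance of the normalising map is indeed unnecessary. For the passage from $\Xi(r)$ to $\Omega(r^2)$, do not assume a priori that $\Xi$ is analytic in $(w,\bar w)$; instead note that $\Xi$ is analytic for $r\in(-\epsilon,\epsilon)$ and that $(r,\theta)$ and $(-r,\theta+\pi)$ are the same point, so $\Xi$ is even.
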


\begin{proof}
By \ref{Poincare Birkhoff theorem}, since $f$ is symplectic, there exists a formal, symplectic, near-identity change of coordinates $\phi$ conjugating $f$ to its resonant Birkhoff normal form \eqref{res Birkhoff form}. This $\phi$ is indeed real-analytic, due to the fact that $f$ admits a real-analytic first integral (\cite{Zung}).

Write $f_{\text{NF}} := \phi \circ f \circ \phi^{-1}$ and  $\tilde I_1 := I_1\circ\phi^{-1}$ which is real-analytic and $\mathbb{Z}_q$-invariant (this comes from the proof of \ref{theorem b}). Note also that the Hessian of $\tilde{I}_1$ at the origin is $c(x^2+y^2)$, $c>0$.

One can find a real-analytic, $\mathbb{Z}_q$-equivariant, symplectic diffeomorphism $G$ and a real-analytic function $F$, with $F(0)=0$ and $F'(0)=c>0$, such that $\tilde I_1 \circ G^{-1}(w,\bar w) = F(|w|^2)$ (see \cite[sections 50-52]{Arnold3} and \cite{Audin da Silva}).

Now, set $f_{\text{NF}}' := G\circ f_{\text{NF}}\circ G^{-1}$. It is immediate that $f_{\text{NF}}'$ is a symplectic map, which commutes with the linear part of $f_{NF}$.

Writing $\tilde I_1' := \tilde I_1 \circ G^{-1} = F(|w|^2)$, we get:
\[
\tilde I_1'(f_{\text{NF}}'(w)) = \tilde I_1\big(G^{-1}(f_{\text{NF}}'(w))\big) = \tilde I_1\big(f_{\text{NF}}(G^{-1}(w))\big) = \tilde I_1(G^{-1}(w)) = \tilde I_1'(w),
\]
i.e.\ $F(|f_{\text{NF}}'(w)|^2) = F(|w|^2)$ and, since $F'(0) = c > 0$, this forces the relation $f_{\text{NF}}'(w)|^2 = |w|^2$.

In polar coordinates, the radial component of $f_{\text{NF}}'$ is therefore $r$ identically and its angular component has the form $\theta + \Xi(r,\theta)$ for some real-analytic $\Xi$, where $\Xi(0,\theta) = 2\pi p/q$  and $2\pi/q$-periodic in $\theta$.

It remains to to show that $\Xi$ depends only on $r$. To do that, just compute:
\[
(f_{\text{NF}}')^*(dx\wedge dy) =r\left(1+\frac{\partial\Xi}{\partial\theta}\right)dr\wedge d\theta.
\]
Symplecticity of $f_{\text{NF}}'$ requires this to be equal to $r\,dr\wedge d\theta$ and thus $\partial\Xi/\partial\theta \equiv 0$.

The conclusion now follows.

\end{proof}

As in the non-symplectic case, the existence of a second first integral linearises the germ $f$.

\begin{corollary}
Let $f$ be a germ as in \ref{thm:symplectic_resonant_twist}. If $f$ admits a second, (real-analytic, real-meromorphic having no pole at the origin, smooth) first integral $I_2$, functionally independent of $I_1$ on a open and dense subset of $U$, then $f$ is real-analytically, symplectically, conjugate to its linear part.
\end{corollary}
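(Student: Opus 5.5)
By the preceding theorem we may assume that $f$ is already in the form $f_{\nu\nu\nu}(r,\theta)=(r,\theta+\Omega(r^2))$, with $\Omega$ real-analytic and $\Omega(0)=2\pi p/q$. The conjugating map is real-analytic and symplectic. We therefore only have to show that $\Omega$ is constant. Then $f_{\nu\nu\nu}$ is the rotation by $2\pi p/q$, i.e. the linear part $L$, and the composite conjugacy is real-analytic and symplectic as required.

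The argument is by contradiction, in the spirit of the proof of the theorem in the non-resonant section on the absence of a second integral. Suppose $\Omega$ is not constant. Then $\Omega'$ vanishes only on a discrete set, so $\Omega(r^2)$ is strictly monotonic on countably many subintervals whose union is dense in $(0,\epsilon)$. The argument of the density lemma for irrational rotation radii, applied on each such subinterval, shows that the set of $r$ with $\Omega(r^2)/2\pi\notin\mathbb{Q}$ is co-countable and hence dense. (Here $\Omega(0)$ is rational, but the lemma's proof only used monotonicity.) On each circle $S_r$ with irrational rotation, the transformed integral $\tilde I_2=I_2\circ\phi^{-1}$ is constant along dense orbits. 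So $\tilde I_2$ is constant on $S_r$, by continuity in the smooth and analytic cases. In the meromorphic case we proceed as before: first shrink the neighbourhood so that the denominator does not vanish, exactly as in the corollary on meromorphic integrals in the resonant case. Density of these radii and continuity of $\partial_\theta \tilde I_2$ then give $\partial_\theta\tilde I_2\equiv 0$. Hence $\tilde I_2$ depends on $r$ alone, as does $\tilde I_1=F(r^2)$. This contradicts functional independence on an open dense set.

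Alternatively, in the real-analytic case we can invoke the theorem on linearisation by a second analytic integral directly. A symplectic $f$ is in particular a germ as in the theorem for the resonant non-symplectic case, so it is real-analytically conjugate to $L$. The point needing care is that this conjugacy be symplectic. That is exactly why I prefer the direct argument above: it works in the symplectic normal form $f_{\nu\nu\nu}$, so no new conjugacy is introduced.

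The main obstacle is the smooth case, which needs care on what "functionally independent" means. A smooth $I_2$ that is a function of $r$ alone, for instance flat at $0$, is still dependent on $I_1$ in the sense of $dI_1\wedge dI_2\equiv 0$. The argument gives exactly $dI_1\wedge dI_2\equiv0$ wherever $\tilde I_2$ is continuous, which is incompatible with independence on an open dense set, so the proof goes through. I also need to check that the exceptional radii, namely critical points of $\Omega$ and rational values, are handled by continuity in $r$, as in the earlier proof.
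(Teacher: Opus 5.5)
Your proof is correct, but it takes a different route from the paper's.

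\textbf{The paper's route.} After reducing to $f_{\nu\nu\nu}$ via \ref{thm:symplectic_resonant_twist}, the paper reruns the formal argument of \ref{theorem c}. In the Taylor series of $I_2$ only the monomials $w^j\bar w^k$ with $j\equiv k \pmod q$ survive. The radial part is subtracted, and $\Omega-2\pi p/q$ is killed degree by degree using the lowest-order non-radial part and the non-vanishing trigonometric polynomial $K(\theta)$. The meromorphic case is reduced to the analytic one by shrinking $U$, as you do. The smooth case is delegated to \ref{thm:smooth_resonant} via Borel's theorem.

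\textbf{Your route.} You transplant the dynamical argument of \ref{thm:unified_no_second_integral}. If $\Omega$ is non-constant, monotonicity alone gives a co-countable set of radii on which $f_{\nu\nu\nu}$ is an irrational rotation, so every continuous invariant is radial. You are right that the proof of \ref{lem:dense_irrational_rotation_radii} uses irrationality of $\Omega(0)$ only in the constant case.

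\textbf{What your route buys.} It needs only continuity of $I_2$. So it also covers smooth integrals whose Taylor series is a function of $r^2$ alone, for instance $I_1$ plus a flat non-radial function, which the formal argument cannot detect. \ref{thm:smooth_resonant} explicitly assumes such integrals away, but the present statement does not. Your proof therefore covers the smooth case as stated, whereas the paper's covers only integrals with non-radial jet.

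\textbf{What the paper's route buys.} It is uniform across cases. Yours depends on every invariant circle being rotated rigidly, which is exactly what symplecticity provides. It does not transfer to the non-symplectic normal form $f_{\nu\nu}$, where $\Xi$ depends on $\theta$ and the circle maps may have rotation number $p/q$ for all radii (e.g.\ $\Xi=2\pi p/q+\operatorname{Im}(w^q)$). The order-by-order argument handles both settings alike.

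\textbf{Your set-aside alternative.} The alternative you set aside in the analytic case closes easily. Conjugacy to $L$ gives $f^q=\mathrm{id}$, hence $q\,\Omega(r^2)\in 2\pi\mathbb{Z}$ for all small $r$. So $\Omega\equiv 2\pi p/q$ already in the symplectic coordinates.
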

\begin{proof}
For the real-analytic case, the proof goes along the lines of the proof of \ref{theorem c} and we shall not repeat it here.

For the real-meromorphic case, write $I_2=P/Q$, with $P,Q$ real analytic and $Q(0,0)\neq 0$. There is a neighbourhood $U' \subseteq U$ of the origin, in which $Q\neq0$; hence $I_2$ is real-analytic on $U'$, functionally independent to $I_1$ in a open and dense subset of $U'\setminus\{(0,0)\}$. We are thus in the previous case, of a real-analytic second first integral (with $U'$ in place of $U$) and the conclusion follows.

For the smooth case, the proof is similar with the proof of \ref{thm:smooth_resonant}. One has just to use \ref{thm:symplectic_resonant_twist} instead of \ref{theorem b}.
\end{proof}
\section*{Conclusions}
We have studied the effect the existence of first integrals have on the normal form of a planar map, at an elliptic fixed point. In the resonant case a single first integral simplifies the normal form, while no other, functionally independent first integral can exist. In the non-resonant case, the existence of a second, functionally independent first integral cannot be excluded. In this case, the normal form should be linear. These results hold for the symplectic case as well, while we allow the second first integral to be smooth, real-meromorphic with no pole at the origin, or real-analytic. The case where the second first integral has a pole has at the origin was not considered here. 

We hope that these results will be useful to the analysis of specific mappings and that we shall be able to present results in this direction in a future publication.


\end{document}